\title{First-order sentences in random groups II: $\forall\exists$-sentences.}
\author{O. Kharlampovich, R. Sklinos}
\begin{document}

\maketitle

\begin{abstract}
We prove that a random group, in Gromov's density model with $d<1/16$   satisfies with overwhelming probability a universal-existential  first-order sentence $\sigma$ (in the language of groups) if and only if $\sigma$ is true in a nonabelian free group.
\end{abstract}

\section{Introduction}
In this paper we continue our work that connects random groups with the first-order theory of nonabelian free groups (see \cite{KhS}). We generalize our previous result, that a random group (of density $d<1/16$) satisfies with overwhelming probability a universal sentence in the language of groups if and only if the sentence is satisfied in a nonabelian free group, to $\forall\exists$-sentences. Our main result is 

\begin{thmIntro}
Let $\sigma$ be a $\forall\exists$ first-order sentence in the language of groups. Let $0\leq d<1/16$ be a real number. Then a random group of density $d$ satisfies, with overwhelming probability, the sentence $\sigma$ if and only if a non abelian free group satisfies $\sigma$.
\end{thmIntro}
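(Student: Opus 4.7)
The plan is to reduce the theorem to two transfer principles, building on the previous paper \cite{KhS} and the Merzlyakov--Kharlampovich--Myasnikov--Sela theory of formal solutions over free groups. First I would normalize $\sigma$ as $\forall\bar{x}\,\exists\bar{y}\,\phi(\bar{x},\bar{y})$ where $\phi$ is a finite disjunction of systems of the form $\Sigma_i(\bar{x},\bar{y}) = 1 \wedge \Psi_i(\bar{x},\bar{y}) \neq 1$. Two inputs drive the argument: (i) the Merzlyakov-type theorem on formal solutions, which for any $\forall\exists$-sentence true in a nonabelian free group $F$ produces a finite Makanin--Razborov-style diagram whose branches each carry a formal witness $\bar{y}_i(\bar{x})$ expressible as words in $\bar{x}$ of length linearly bounded in $|\bar{x}|$; and (ii) the main result of \cite{KhS}, which asserts that random groups of density $d<1/16$ share the universal theory of $F$, together with the quantitative non-collapse estimates underlying its proof.

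For the direction $F\models\sigma \Rightarrow G\models\sigma$ with overwhelming probability, the formal solutions let me rewrite $\sigma$ as a finite conjunction of universal statements: on each branch $i$, ``for every $\bar{x}$ satisfying the branch conditions, the formal witness $\bar{y}_i(\bar{x})$ satisfies $\Sigma_i = 1 \wedge \Psi_i \neq 1$,'' together with the universal covering statement ``every $\bar{x}$ satisfies the conditions of some branch.'' Each of these is a universal sentence, true in $F$ by construction, hence true in $G$ by \cite{KhS}; taking the conjunction over the finitely many branches yields $G\models\sigma$.

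For the converse $F\not\models\sigma \Rightarrow G\not\models\sigma$, I fix $\bar{x}_0\in F$ with $F\models\forall\bar{y}\,\neg\phi(\bar{x}_0,\bar{y})$ and let $\bar{x}_0'\in G$ be its image under the canonical quotient $F\to G$. One must show that no $\bar{y}\in G$ satisfies $\phi(\bar{x}_0',\bar{y})$. The subtlety is that an equation $\Sigma_i(\bar{x}_0,\tilde{\bar{y}})=1$ may hold in $G$ without holding in $F$, so a naive lift does not work. The remedy is to run the Makanin--Razborov analysis on the system $\phi(\bar{x}_0,\bar{y})$: its solutions in any group are parametrized by finitely many resolution types, and the failure in $F$ provides, for each type, a nontrivial obstruction word of length bounded in $|\bar{x}_0|$ and the complexity of $\sigma$; the non-collapse bounds from \cite{KhS} then force these obstructions to persist in $G$ with overwhelming probability.

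The main obstacle is the converse direction: unlike $\forall\exists$-statements, the $\exists\forall$-statement $\neg\sigma$ does not directly transfer, so it must be reformulated as a finite-scale combinatorial obstruction that the small-cancellation and probabilistic structure of random groups at $d<1/16$ can certify. The forward direction, by contrast, collapses to the universal case of \cite{KhS} once the formal-solutions machinery is invoked. A careful accounting of length parameters — the length of $\bar{x}_0$, the complexity of $\sigma$, and the relator length $\ell$ in the random group — is what will make or break the probabilistic estimates, and is likely where the density bound $1/16$ (as opposed to a weaker threshold) is actually used.
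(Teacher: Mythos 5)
Your plan for the forward direction ($\F\models\sigma\Rightarrow\Gamma\models\sigma$ w.o.p.) has a genuine gap. You claim that the Merzlyakov/formal-solution machinery lets you rewrite $\sigma$ as a finite conjunction of \emph{universal} sentences, each true in $\F$, to which the universal transfer theorem of \cite{KhS} applies directly. This cannot work as stated: if a $\forall\exists$-sentence true in $\F$ were always equivalent in $\F$ to a conjunction of universal sentences, one would be dangerously close to model-completeness, which fails for free groups. The actual structure produced by Theorems \ref{Merz} and \ref{MerzTowers} is more complicated. After the first Merzlyakov step, the formal witnesses are \emph{not} words in $\bar x$ alone but in $\bar x,\bar z,\bar s$ (tower and closure variables), and the ``branch conditions'' — membership in a covering closure — are Diophantine, i.e.\ given by $\exists\bar z,\bar s\,\Sigma_{C_i}=1$. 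So the covering statement ``every $\bar x$ satisfies some branch condition'' is not universal. Moreover, the guarantee from Theorem \ref{MerzTowers} is existential: \emph{some} extension $h_i:C_i\to\F$ makes the inequation hold, not all of them. The argument one actually needs is a stratification of $\F^{|\bar x|}$ together with the \emph{lifting} theorem (Theorem \ref{univ}): for each tuple $\bar b_\ell$ in $\Gamma_\ell$ in a given stratum, one lifts the relevant auxiliary solution back to $\F$, observes that a specific Diophantine ``bad'' condition would then have to hold in $\F$, contradicts the choice of stratum, and concludes that the bad condition holds in $\Gamma_\ell$ only with vanishing probability. That is a genuinely different use of the $d<1/16$ machinery than a bare universal transfer, and your reduction skips it.

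Your plan for the converse direction is, by contrast, substantially overcomplicated. You worry about equations $\Sigma_i(\bar x_0,\tilde{\bar y})=1$ holding in $\Gamma$ but not $\F$, about running a Makanin--Razborov analysis on $\phi(\bar x_0,\bar y)$, and about producing quantitative ``obstruction words.'' None of this is necessary. If $\F\not\models\sigma$, then $\F\models\neg\sigma$, which is an $\exists\forall$-sentence $\exists\bar x\,\forall\bar y\,\neg\phi(\bar x,\bar y)$. By the elementary equivalence of nonabelian free groups (\cite{MR2238945}, \cite{MR2293770}), a witness $\bar b$ can be found in $\F_n$ itself. Now $\forall\bar y\,\neg\phi(\bar b,\bar y)$ is a \emph{universal sentence with constants} $\bar b$ from $\F_n$, and the universal transfer with constants (Theorem \ref{UnivwithCon1}) immediately gives $\Gamma\models\forall\bar y\,\neg\phi(\pi_\ell(\bar b),\bar y)$ w.o.p.\ — the lifting phenomenon you are worried about is exactly what Theorem \ref{UnivwithCon} provides, so no extra estimate is needed. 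In short: you have the difficulty exactly backwards. The $\exists\forall$ direction is the easy one; the $\forall\exists$ direction is where the iterative stratification and lifting are indispensable, and your proposal elides them.
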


We will make heavy use of the machinery developed for answering Tarski's question and in particular {\em formal solutions, towers, closures of towers} (see \cite{MR1972179}, \cite{MR2154989}) and the {\em process of validating a $\forall\exists$-sentence} (see \cite{Sela4}, \cite{MR2293770}).

\section{Preliminaries}

\subsection{The density model}
Recall Gromov's density model of randomness. 

\begin{definition}[Gromov's Density Model]\label{Density}
Let $\mathbb{F}_n:=\langle e_1, \ldots, e_n\rangle$ be a free group of rank $n$. Let $S_{\ell}$ be the set of reduced words on $e_1, \ldots, e_n$ of length $\ell$. 

Let $0\leq d\leq 1$. Then a random set of relators of density $d$ at length $\ell$ is a subset of $S_{\ell}$ that consists of  $(2n-1)^{d\ell}$-many elements picked randomly (uniformly and independently) among all elements of $S_{\ell}$. 

A group $G:=\langle e_1,\ldots, e_n \ | \ \mathcal{R} \ \rangle$ is called random of density $d$ at length $l$ if $R$ is a random set of relators of density $d$ at length $\ell$.

A random group of density $d$ satisfies some property (of presentations) $P$ with overwhelming probability (w.o.p.), if the probability of occurrence of $P$ tends to $1$ as $\ell$ goes to infinity. 
\end{definition}

We note in passing that at density $0$, we, formally, have one relator of length $\ell$, but the usual convention is that instead of one we have a finite fixed number of relators of length $\leq \ell$ (see \cite[Remark 12]{MR2205306}). Hence the few-relator model (see \cite[Definition 1]{MR2205306}) is a special case of the density model for $d=0$. 

Heuristically, one can understand this as follows: the ratio of groups in $(2n-1)^{d\ell}$ relators all of length $\ell$ that satisfy property $P$ over all such groups is a number, say $p$, that for ``interesting properties" will depend on $\ell$, i.e. $p:=p(\ell)$. If $p(\ell)$ goes to $1$ as $\ell$ goes to $\infty$, then we say that w.o.p. a random group has property $P$.

We will need the following results from \cite{KhS}. 

\begin{theorem}\label{univ1} \cite[Theorem 7.14]{KhS}
Let $d<1/16$. Let $\sigma$ be a universal sentence in the language of groups. Then $\sigma$ is almost surely true in the random group of density $d$ if and only if it is true in a nonabelian free group.
\end{theorem}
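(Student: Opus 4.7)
The plan is to reduce $\sigma$ to its existential negation and compare the two sides disjunct by disjunct. Writing $\sigma = \forall\bar{x}\,\phi(\bar{x})$ and putting $\neg\phi$ in disjunctive normal form gives
\[
\neg\phi(\bar{x}) \;\equiv\; \bigvee_{i=1}^{N}\Bigl(\bigwedge_{j} w_{ij}(\bar{x})=1 \;\wedge\; \bigwedge_{k} v_{ik}(\bar{x})\neq 1\Bigr),
\]
so that the theorem reduces to showing that each disjunct $\Sigma_i(\bar{x})$ has a solution in $F$ if and only if it has a solution in a random group $G$ of density $d<1/16$ with overwhelming probability.

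For the direction ``solvable in $F$ $\Rightarrow$ solvable in $G$ w.o.p.'', I would fix a witness $\bar{f}\in F$ of some $\Sigma_{i_0}$ and push it forward under the canonical surjection $\pi\colon F\to G$. Equations are preserved by any homomorphism, so it remains to verify the inequations, which amount to requiring that the finitely many fixed non-trivial elements $h_k := v_{i_0 k}(\bar{f})\in F$ remain non-trivial in $G$. At any density $d<1/2$ the shortest element in the normal closure of the random relators has length linear in $\ell$ with overwhelming probability, so each fixed $h_k$ survives w.o.p.; a union bound completes this direction.

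For the reverse and main direction I argue by contraposition. Assuming $F\models\sigma$, I want to show that no $\Sigma_i$ admits a solution in $G$, w.o.p. Fix $i$ and let $H_i$ be the coordinate group of the equational part $\bigwedge_j w_{ij}(\bar{x})=1$; a hypothetical solution in $G$ is then a homomorphism $\eta\colon H_i\to G$ with $\eta(v_{ik})\neq 1$ for every $k$. The Makanin--Razborov machinery over $F$ (Kharlampovich--Myasnikov, Sela) describes all homomorphisms $H_i\to F$ through a finite diagram terminating in limit groups $L_1,\ldots,L_m$; the assumption $F\models\sigma$ forces, for each $L_p$, some $v_{ik}$ to lie in the kernel of the canonical quotient $H_i\to L_p$. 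The remaining step is to prove that w.o.p.\ any homomorphism $\eta\colon H_i\to G$ likewise factors, up to the action of the modular group of $H_i$, through one of these $L_p$, so that the same $v_{ik}$ is killed in $G$ and $\eta$ cannot be a witness. I would establish this via a Bestvina--Paulin style limit argument: the hyperbolicity and $C'(1/8)$ small cancellation of $G$ (both generic at $d<1/16$) ensure that a putative sequence of witnesses in random groups of growing length yields a limit action of $H_i$ on an $\R$-tree that is free enough for the associated limit quotient to be a limit group of $F$.

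The main obstacle is this factoring step. Universal non-witnesses may involve elements of $G$ of arbitrarily large word length, so local injectivity of $\pi$ on a bounded ball is insufficient: one must exploit the global small-cancellation geometry available at $d<1/16$ to rule out pathological homomorphisms $H_i\to G$ with no analog in $F$. The density threshold $1/16$ is what provides the $C'(1/8)$ control needed to push the limit-group translation through.
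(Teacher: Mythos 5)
Your reduction to the existential negation in disjunctive normal form, and your ``easy'' direction (push a witness from $\F$ forward under $\pi_\ell$; equations are preserved, and the finitely many fixed nontrivial elements $v_{i_0k}(\bar f)$ survive w.o.p.\ because the injectivity radius of $\pi_\ell$ grows linearly in $\ell$ at any $d<1/2$) are exactly as one expects and are fine. The problem is the main direction. In the paper this theorem is not proved from scratch at all: it is quoted from the authors' first paper and derived as an immediate corollary of Theorem \ref{univ} (KhS, Proposition 7.13), the statement that \emph{every solution of a fixed system of equations in $\Gamma_\ell$ is, w.o.p., the image under $\pi_\ell$ of a solution in $\F_n$}. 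Given that lifting statement the corollary is two lines: a hypothetical witness $\bar b_\ell$ of some disjunct lifts to $\bar c_\ell$ solving the same equations in $\F$, and the inequations hold for $\bar c_\ell$ automatically (if $v_{ik}(\bar c_\ell)=1$ in $\F$ then $v_{ik}(\bar b_\ell)=1$ in $\Gamma_\ell$), contradicting $\F\models\sigma$. Note in particular that no limit-group analysis of $H_i$ is needed once one has the lifting theorem.

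Your proposal instead tries to re-derive the content of that lifting theorem in one paragraph via a Bestvina--Paulin argument, and this is a genuine gap: it is precisely the main technical theorem of the prior paper, where the bound $d<1/16$ is actually used, and the sketch does not work as stated. Concretely: (i) the target group varies with $\ell$, so ``a sequence of witnesses in random groups of growing length'' requires a careful probabilistic set-up (extracting, for a fixed system, a deterministic sequence of groups enjoying the generic geometric properties and carrying bad homomorphisms, uniformly over all potential witnesses) before any limit can be taken; (ii) even then, concluding that the rescaled limit action of $H_i$ on an $\R$-tree yields a quotient that is a limit group \emph{over $\F$} requires control of arc and tripod stabilizers and a shortening argument over this varying family of hyperbolic targets --- none of which follows merely from hyperbolicity and $C'(1/8)$ genericity, and all of which is the actual work; (iii) the conclusion you draw is also too quick: shortening gives at best that $\eta\circ\alpha$ factors through some $L_p$ for a modular automorphism $\alpha$ of $H_i$, hence $\eta(\alpha(v_{ik}))=1$, which does not contradict the inequations $\eta(v_{ik})\neq 1$ satisfied by the original witness $\eta$. (Your intermediate claim that $\F\models\sigma$ forces some $v_{ik}$ to die in each maximal limit quotient $L_p$ is correct, by full residual freeness, but it does not rescue the step above.) The honest options are either to invoke Theorem \ref{univ} as the paper does, or to prove a lifting/factorization statement of that strength, which is a paper-length task rather than a paragraph.
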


Theorem $\ref{univ1}$ was obtained as a corollary of the following.

\begin{theorem}\label{univ} \cite[Proposition 7.13]{KhS}
Let $d<1/16$. Let $V(\bar{x})=1$ be system of equations. Suppose $\Gamma_\ell$ is a random group of density $d$ at length $\ell$ and $\pi_\ell:\F_n\rightarrow\Gamma_\ell$ the canonical quotient map. 

Then, every solution $V(\bar b_\ell)=1$ in $\Gamma_\ell$ is the image of a solution $V(\bar{c}_\ell)=1$ in $\F_n$, under the canonical quotient map $\pi_{\ell}$, i.e.   $\pi_\ell(\bar{c}_\ell)=\bar{b}_\ell$, with probability tending to $1$ as $\ell$ goes to infinity.  
\end{theorem}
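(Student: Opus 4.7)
The plan is to combine the Makanin--Razborov (MR) description of solutions of $V(\bar x)=1$ over free groups with a small-cancellation style genericity estimate tailored to the density model with $d<1/16$.

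First, I would invoke the MR decomposition for the coordinate group $G_V$ of the system $V=1$: there is a finite family of proper limit quotients $G_V \twoheadrightarrow L_1,\ldots, L_k$ such that every homomorphism $G_V \to \F_n$ factors through some $L_i$, up to precomposition with a modular automorphism. This reduces the study of lifts to a finite combinatorial question, since it suffices to work with a finite list of ``solution types'' over $\F_n$, each parametrised by a controlled family of words.

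Second, I would reformulate the lifting problem. Given a solution $\bar b_\ell$ in $\Gamma_\ell$, choose any set-theoretic lift $\bar c_\ell$ of $\bar b_\ell$ in $\F_n$. Then $V(\bar c_\ell)$ lies in the normal closure $N_\ell$ of the random relators, and any other lift differs componentwise from $\bar c_\ell$ by elements of $N_\ell$. The goal is then to show that, w.o.p., at least one such lift $\bar c_\ell'$ satisfies $V(\bar c_\ell')=1$ identically in $\F_n$; this is the probabilistic statement to be controlled.

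Third, I would exploit the density condition $d<1/16$ --- which implies a strong small-cancellation condition (of type $C'(1/8)$) at length $\ell$ --- to run a Dehn-type/isoperimetric argument on the random presentation. Such an argument shows that any word which vanishes in $\Gamma_\ell$ while being drawn from a bounded-complexity family of words in $\F_n$ can vanish only for specific combinatorial reasons, with probability tending to $1$. Matching this against the MR-controlled family of candidate obstructions $V(\bar c_\ell')$, one concludes that for every $\bar b_\ell$ a genuine lift $\bar c_\ell'$ exists.

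The main obstacle will be the last step: the fibre $\pi_\ell^{-1}(\bar b_\ell)$ is infinite, so a na\"{\i}ve union bound over all candidate lifts is useless. The delicate point is to use the MR structure to restrict attention to a controlled ``short'' representative of each lift class, and then to estimate the probability that a random relator participates in any of the resulting candidate obstructions. This is where the density threshold $d<1/16$ enters, via a cardinality-versus-$(2n-1)^{d\ell}$-relators estimate analogous to the one used in the universal case of \cite{KhS}.
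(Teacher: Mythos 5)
The present paper cites this statement as \cite[Proposition 7.13]{KhS} and does not reproduce its proof, so there is no in-paper argument to compare against; I am assessing your sketch on its own terms.

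Your first step misassigns a role to the Makanin--Razborov decomposition. The MR diagram of $G_V$ parametrises homomorphisms $G_V\to\F_n$, whereas the datum here is a homomorphism $G_V\to\Gamma_\ell$, and the task is to lift it along $\pi_\ell$. Finiteness of the MR diagram does not yield a ``finite list of solution types, each parametrised by a controlled family of words'': the parametrisation is by resolutions whose morphisms have unbounded word length, so the obstruction set has not been reduced to anything a union bound can touch, and nothing later in your sketch actually uses this reduction. More seriously, the step you yourself flag as ``the main obstacle'' is the whole content of the theorem and is left open. You propose to restrict to a controlled short representative and then bound the probability that a random relator participates in a candidate obstruction, calling this ``analogous to the one used in the universal case of \cite{KhS}'' --- but this proposition \emph{is} the key lemma behind that universal case, so the appeal is circular. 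What is missing is the actual mechanism, something along these lines: take a shortest set-theoretic lift $\bar c_\ell$ of $\bar b_\ell$; use small cancellation at $d<1/16$ to show that if some component of $V(\bar c_\ell)$ were nontrivial in $\F_n$ but trivial in $\Gamma_\ell$, a Greendlinger-type piece of a relator would have to sit in a constrained position inside the coordinates of $\bar c_\ell$, permitting a shortening that contradicts minimality; and then bound the probability of the exceptional configurations uniformly over all solutions in $\Gamma_\ell$ simultaneously, not merely a fixed one. As written, the sketch correctly identifies the difficulty but restates it rather than resolving it.
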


To understand Theorem \ref{univ} in the light of Definition \ref{Density} one considers the following property $P$: for a fixed $V(\bar x)=1$, every solution of $V(\bar x)=1$, is the pre-image of a solution of $V(\bar x)=1$ in $\F$, under the canonical quotient map. 

Alternatively, under the same interpretation, one can think that the above theorem says that either $\bar b_\ell$ is the image (under the canonical map) of a solution of $V(\bar x)=1$ in $\F_n$, or the probability that $V(\bar{b}_\ell)=1$ tends to $0$ as $\ell$ goes to $\infty$. 

With little care about defining constants (coefficients) in random groups one can make sense of a first-order sentence with constants.

\begin{definition}
Let $\Gamma_\ell$ be a random group of density $d$ at length $\ell$. Let $b_\ell$ be an element in $\Gamma_\ell$. Then $b_\ell$ is a constant if there exists an element $b\in\F_n$ such that $b_\ell$ is the image of $b$, under the canonical quotient map, for all $\ell\in\mathbb{N}$.  
\end{definition}


Hence, under the above definition, a sentence with constants in $\F_n$ makes sense in a random group and likewise a sentence with constants in a random group makes sense in $\F_n$. 

For the purposes of this paper we need a generalization of Theorem \ref{univ1} to first-order sentences defined over constants.  

\begin{theorem}\label{UnivwithCon1}
Let $d<1/16$. Let $\sigma$ be a universal sentence in the language of groups over $\F_n$ with constants. Then, $\sigma$ is true in $\F_n$ if and only if it is true with overwhelming probability in a random group of density $d$.
\end{theorem}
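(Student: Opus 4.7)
The plan is to reduce $\sigma$ via conjunctive normal form to a finite conjunction of sentences of the form
$$\forall \bar x\,\bigl(W_1(\bar x,\bar a)=1 \wedge \cdots \wedge W_m(\bar x,\bar a)=1 \ \to\ V_1(\bar x,\bar a)=1 \vee \cdots \vee V_r(\bar x,\bar a)=1\bigr),$$
and to handle each conjunct separately, since a finite intersection of overwhelming-probability events is still overwhelming. Every constant $a_i\in\F_n$ is by definition a fixed word in the free generators $e_1,\ldots,e_n$, so any equation $W(\bar x,\bar a)=1$ may be rewritten as an equation $W'(\bar x)=1$ without constants by substituting each $a_i$ with its defining word in the generators. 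Under this rewriting Theorem \ref{univ} applies directly, and crucially the lifted tuple $\bar c$ automatically pairs with the original constants $\bar a$ themselves, not merely with some $\F_n$-element mapping to $\bar a_\ell$.

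For the direction ``true in $\F_n$ implies true in $\Gamma_\ell$ w.o.p.'', I fix one conjunct and take an arbitrary tuple $\bar b_\ell$ in $\Gamma_\ell$. If some $W_i(\bar b_\ell,\bar a_\ell)\neq 1$ the implication is vacuous, so I may assume $W_i(\bar b_\ell,\bar a_\ell)=1$ for every $i$. By Theorem \ref{univ} applied to the constant-eliminated system, with overwhelming probability there is a lift $\bar c\in\F_n$ with $\pi_\ell(\bar c)=\bar b_\ell$ and $W_i(\bar c,\bar a)=1$ for every $i$. Since $\sigma$ holds in $\F_n$, some $V_k(\bar c,\bar a)=1$ there, and applying $\pi_\ell$ gives $V_k(\bar b_\ell,\bar a_\ell)=1$ in $\Gamma_\ell$, as required.

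For the converse I argue the contrapositive. If $\sigma$ fails in $\F_n$, some conjunct fails, producing $\bar c\in\F_n$ with $W_i(\bar c,\bar a)=1$ for every $i$ and $V_k(\bar c,\bar a)\neq 1$ for every $k$. The image $\pi_\ell(\bar c)$ automatically satisfies the equations $W_i$ since $\pi_\ell$ is a homomorphism, so what remains is to show that the finitely many fixed nontrivial elements $V_k(\bar c,\bar a)\in\F_n$ stay nontrivial in $\Gamma_\ell$ w.o.p. This is a standard non-triviality feature of random groups at density $d<1/2$: for any fixed nontrivial $w\in\F_n$, the probability that $\pi_\ell(w)=1$ tends to $0$ as $\ell\to\infty$; a finite intersection of such events is again overwhelming.

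The main obstacle is the careful handling of constants in the lifting step: Theorem \ref{univ} is stated for systems in the variables $\bar x$ alone, and one must check that expanding the constants as words in the generators before invoking it really produces a lift $\bar c$ satisfying $W_i(\bar c,\bar a)=1$ \emph{on the nose} in $\F_n$, rather than merely up to the kernel of $\pi_\ell$. Once this reduction is secured, the theorem follows as a routine strengthening of \cite[Theorem 7.14]{KhS}, packaging the constants-free lift of Theorem \ref{univ} together with the survival of fixed nontrivial words in random groups at density $d<1/16$.
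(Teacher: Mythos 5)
The core of your argument rests on the claim that after expanding each constant $a_i$ as a word in the generators $e_1,\ldots,e_n$, Theorem \ref{univ} "applies directly" and that "crucially the lifted tuple $\bar c$ automatically pairs with the original constants $\bar a$ themselves." This is exactly the point that needs proof, and your substitution trick does not deliver it. Theorem \ref{univ} is stated for a coefficient-free system $V(\bar{x})=1$. After substituting the defining words, your system still contains the generators $e_1,\ldots,e_n$ as coefficients; the only way to fit it into the hypotheses of Theorem \ref{univ} is to promote those generators to additional variables $\bar z$ and apply the theorem to the tuple $(\bar b_\ell,\pi_\ell(\bar e))$. What that yields is a lift $(\bar c,\bar e')$ with $V(\bar c,\bar e')=1$ in $\F_n$ and $\pi_\ell(\bar e')=\pi_\ell(\bar e)$ --- but nothing forces $\bar e'=\bar e$, since $\ker\pi_\ell$ is nontrivial. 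Your universal sentence $\sigma$ was assumed true in $\F_n$ only for the \emph{actual} constants $\bar a$; it tells you nothing about tuples $(\bar c,\bar e')$ where $\bar e'$ is merely some pre-image of $\pi_\ell(\bar e)$. So you cannot conclude that $V_k(\bar c,\bar a)=1$ in $\F_n$, and the forward direction collapses.

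This is precisely why the paper introduces Theorem \ref{UnivwithCon}, the version of the lifting theorem \emph{with constants}: it asserts that the lift $\bar c_\ell$ satisfies $V(\bar c_\ell,\bar a)=1$ with the original $\bar a$ on the nose. That strengthening is genuine content (it is the ``restricted'' analogue of Theorem \ref{univ}, lifting solutions of systems over $\F_n$ rather than coefficient-free systems), and Theorem \ref{UnivwithCon1} is then deduced from it in the same way that Theorem \ref{univ1} is deduced from Theorem \ref{univ}. Your converse direction --- failure of $\sigma$ in $\F_n$ persists in $\Gamma_\ell$ because fixed nontrivial words survive the quotient w.o.p.\ --- is fine, and your CNF reduction is also fine; but the forward direction as written relies on an unjustified claim that is in fact the substance of the theorem you would need to invoke.
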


It is essentially a corollary of a generalization of Theorem \ref{univ} with constants. 

\begin{theorem}\label{UnivwithCon}
Let $d<1/16$. Let $V(\bar{x},\bar a)=1$ be system of equations over $\F_n$. Suppose $\Gamma_\ell$ is a random group of density $d$ at length $\ell$ and $\pi_\ell:\F_n\rightarrow\Gamma_\ell$ the canonical quotient map. 

Then, every solution $V(\bar b_\ell,\bar a)=1$ in $\Gamma_\ell$ is the image of a solution $V(\bar{c}_\ell,\bar a)=1$ in $\F_n$, under the canonical quotient map $\pi_{\ell}$, i.e.   $\pi_\ell(\bar{c}_\ell)=\bar{b}_\ell$, with probability tending to $1$ as $\ell$ goes to infinity. 
\end{theorem}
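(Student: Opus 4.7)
The plan is to reduce Theorem \ref{UnivwithCon} to Theorem \ref{univ} by promoting the constants $\bar{a}$ to additional variables. Introduce a fresh tuple $\bar{y}$ of the same length as $\bar{a}$ and consider the constant-free system $W(\bar{x}, \bar{y}) := V(\bar{x}, \bar{y})$ in variables $\bar{x} \cup \bar{y}$. Setting $\bar{a}_\ell := \pi_\ell(\bar{a})$, a solution $V(\bar{b}_\ell, \bar{a}_\ell) = 1$ in $\Gamma_\ell$ is precisely a solution $W(\bar{b}_\ell, \bar{a}_\ell) = 1$ of the constant-free system. Applying Theorem \ref{univ} to $W$, with probability tending to $1$ this solution lifts to a tuple $(\bar{c}_\ell, \bar{d}_\ell)$ in $\F_n$ satisfying $W(\bar{c}_\ell, \bar{d}_\ell) = 1$ in $\F_n$, $\pi_\ell(\bar{c}_\ell) = \bar{b}_\ell$, and $\pi_\ell(\bar{d}_\ell) = \bar{a}_\ell$.

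The crucial additional step is to show that one can actually take $\bar{d}_\ell = \bar{a}$, so that $V(\bar{c}_\ell, \bar{a}) = 1$ holds on the nose in $\F_n$, as the statement requires. The apparent obstruction is that the above only yields $\bar{d}_\ell \bar{a}^{-1} \in \ker(\pi_\ell)$ coordinatewise, not equality. To settle this I would revisit the proof of Theorem \ref{univ}: the lift is obtained from an arbitrary initial choice of preimages in $\F_n$ of the given $\Gamma_\ell$-solution, which is then modified using the small-cancellation / Van Kampen diagram structure of the random presentation. Since $\bar{a}$ is itself a preimage of $\bar{a}_\ell$, it is a legitimate starting point for the $\bar{y}$-coordinates. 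The entries of $\bar{a}$ have length bounded by a constant independent of $\ell$, whereas any nontrivial element of $\ker(\pi_\ell)$ has length of order at least $\ell$, a standard consequence of the small-cancellation condition satisfied w.o.p.\ at density $d<1/16$. Therefore for $\ell$ sufficiently large no nontrivial rewriting can alter the $\bar{y}$-coordinates without producing a much longer word, and the modification may be taken to fix them, giving $\bar{d}_\ell = \bar{a}$.

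The main obstacle is making this last point rigorous within the specific rewriting procedure of \cite{KhS}: one needs to verify that the modifications can be organized so as to leave untouched any coordinate whose initial preimage is already a short, fixed word of $\F_n$. An equivalent, and arguably cleaner, reformulation would be to extract from the argument in \cite{KhS} a length bound on the lift $\bar{c}_\ell$ in terms of the chosen initial preimages; if this bound is strictly smaller than the length of the shortest nontrivial element of $\ker(\pi_\ell)$, then choosing $\bar{a}$ as the initial preimage of $\bar{a}_\ell$ pins the lift of those coordinates to $\bar{a}$ automatically, and the theorem follows without further argument.
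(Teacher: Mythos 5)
The paper itself gives no proof of Theorem \ref{UnivwithCon}: it is stated as ``essentially \ldots a generalization of Theorem \ref{univ} with constants,'' with the proof implicitly deferred to the van Kampen diagram / small cancellation argument of \cite{KhS}, extended to systems in which the fixed generators $e_1,\ldots,e_n$ (and hence arbitrary constants $\bar a$, viewed as fixed words in those generators) are allowed to appear. That is the natural route: constants are fixed short words, they appear as fixed pieces of the boundary word $V(\bar c_\ell, \bar a)$, and the diagrammatic analysis in \cite{KhS} does not care whether a letter of the boundary is a variable occurrence or a generator.

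Your reduction is a genuinely different strategy, and it contains a real gap, which you yourself flag as the ``main obstacle'': the pinning step is not a corollary of Theorem \ref{univ} as stated. Theorem \ref{univ} is purely a black-box lifting statement and gives \emph{no} control over which preimage $\bar d_\ell$ of $\pi_\ell(\bar a)$ the lifted solution uses. The quantitative refinement you would need is quite specific: a bound on $|\bar d_\ell|$ that is $o(\ell)$ (in fact, smaller than the girth of the presentation complex, which is of order $\ell$), \emph{uniformly in the $\bar x$-coordinates}. This is far from automatic, because the only length input on the $\bar x$-side is $\bar b_\ell$, whose shortest preimages in $\F_n$ can have length of order $\ell$. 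Any plausible length estimate coming out of a van Kampen rewriting argument couples all the boundary segments, so there is no reason a priori that one can rewrite $\bar c_\ell$ drastically while leaving the $\bar y$-segment of the boundary fixed at $\bar a$. You also cannot simply observe that $\bar a$ is ``already a solution for those coordinates'' and hence untouched --- whether it can be left untouched is exactly the content of the theorem (does a lift $\bar c_\ell$ exist \emph{with} the $\bar y$-slot frozen at $\bar a$), so an argument that just asserts the rewriting ``may be taken to fix them'' is circular unless accompanied by the missing quantitative estimate.

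So as written this is not a proof: it replaces the claim to be proved by a claim about the internals of \cite{KhS} that is at least as strong (a length-controlled, coordinate-selective version of the lifting). To close it you would have to reopen \cite{KhS} and reprove that stronger statement --- at which point it is both simpler and safer to note that the argument in \cite{KhS} already applies verbatim to systems with coefficients in $\F_n$, which is what the paper (silently) does.
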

%

\subsection{Boolean combinations of Universal Existential Axioms}

The following lemma is due to Malcev. Its proof uses the fact that all solutions in a free group of the equation  $x^2y^2z^2=1$ commute.
\begin{lemma}\label{Conj}
Let $\F:=\langle e_1, e_2, \ldots\rangle$ be a nonabelian free group. Then, conjunctions of equations are equivalent (over constants), in $\F$, with one equation:  
$$\F\models \forall x, y \bigl((x=1\land y=1)\leftrightarrow (x^2e_1)^2e_1^{-2}=((ye_2)^2e_2^{-2})^2\bigr)$$
\end{lemma}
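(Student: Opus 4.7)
The backward direction is immediate by substitution: if $x = y = 1$, both sides reduce to $1$. For the forward direction, the plan is to rewrite the hypothesis as a product of three squares equalling $1$ and apply the cited fact twice. Set $A := x^2 e_1$, $B := e_1^{-1}$, and $C := ((ye_2)^2 e_2^{-2})^{-1}$; the given equation rearranges to $A^2 B^2 C^2 = 1$.

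By the cited fact, $A$, $B$, $C$ pairwise commute in $\F$. Since commuting elements of a free group lie in a common (maximal) cyclic subgroup and $B = e_1^{-1}$ is a primitive element, this subgroup must be $\langle e_1\rangle$. Hence $A = e_1^p$ and $C = e_1^q$ for some integers $p, q$, and the identity $A^2 B^2 C^2 = e_1^{2p+2q-2} = 1$ forces $p + q = 1$. Unpacking the definitions yields $x^2 = e_1^{p-1} = e_1^{-q}$ and $(ye_2)^2 = e_1^{-q} e_2^2 = x^2 e_2^2$.

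I then apply the cited fact a second time to the rearranged equation $(ye_2)^2 (e_2^{-1})^2 (x^{-1})^2 = 1$. Now $e_2^{-1}$ plays the role of the primitive element, so the same argument places $ye_2$, $e_2^{-1}$, and $x^{-1}$ inside $\langle e_2\rangle$. In particular $x \in \langle e_2\rangle$, while $x^2 = e_1^{-q} \in \langle e_1\rangle$; since $\langle e_1\rangle \cap \langle e_2\rangle = \{1\}$ in $\F$, this forces $x^2 = 1$, hence $x = 1$ (as $\F$ is torsion-free) and $q = 0$. Writing $ye_2 = e_2^k$ (using $ye_2 \in \langle e_2\rangle$) and plugging into $(ye_2)^2 = e_2^2$ gives $k = 1$, so $y = 1$.

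The main conceptual point — and the step that could be missed — is recognizing that the hypothesis can be massaged into two successive instances of $X^2 Y^2 Z^2 = 1$ in which one of the three factors is a power of a basis element. This is what pins the common cyclic subgroup to $\langle e_1\rangle$ on the first application and to $\langle e_2\rangle$ on the second, so that combining the two conclusions via $\langle e_1\rangle \cap \langle e_2\rangle = \{1\}$ finishes the argument. Beyond this observation, the proof uses only standard properties of free groups: cyclic centralizers, torsion-freeness, and the trivial intersection of cyclic subgroups generated by distinct basis elements.
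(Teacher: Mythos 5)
Your proof is correct and follows exactly the route the paper indicates: rewriting the hypothesis as instances of $X^2Y^2Z^2=1$ and invoking (twice) the Lyndon--Schützenberger fact that any such solution in a free group consists of pairwise commuting elements, then pinning the common cyclic subgroup down to $\langle e_1\rangle$ and $\langle e_2\rangle$ respectively via the primitivity of $e_1$ and $e_2$. The paper leaves the proof as a pointer to this Malcev-style argument, so your write-up simply fills in the details of the intended approach.
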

For disjunctions we get the following
\begin{lemma}\label{Disj}
Let $\F:=\langle e_1, e_2, \ldots\rangle$ be a nonabelian free group. Then, a disjunction of equations is equivalent (over constants), in $\F$, with four conjunctions of equations:
$$\F\models \forall x, y \bigl((x=1\lor y=1)\leftrightarrow \bigwedge_{a\in\{e_1, e_1^{-1}\}\atop b\in\{e_2, e_2^{-1}\}}[x^{a}, y^{b}]=1\bigr)$$
\end{lemma}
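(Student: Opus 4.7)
The forward direction is immediate: if $x=1$ or $y=1$ then each of the four commutators $[x^a,y^b]$ is trivially $1$. For the backward direction I plan to argue by contrapositive, assuming $x\ne 1$ and $y\ne 1$ and producing a choice of $a\in\{e_1,e_1^{-1}\}$ and $b\in\{e_2,e_2^{-1}\}$ with $[x^a,y^b]\ne 1$. The only structural input I need is the standard fact that in a nonabelian free group the centralizer of any nontrivial element is infinite cyclic, and consequently every nontrivial element has a unique primitive root and two nontrivial elements commute iff they share that root.

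Granting this, I would suppose for contradiction that all four commutators vanish. From $[x^{e_1},y^{e_2}]=[x^{e_1},y^{e_2^{-1}}]=1$, the elements $y^{e_2}$ and $y^{e_2^{-1}}$ both lie in the cyclic centralizer $C(x^{e_1})$ and hence commute. Conjugating by $e_2$ gives $[y,\,e_2^{2}ye_2^{-2}]=1$. Letting $w$ denote the primitive root of $y$ and writing $y=w^k$ with $k\ne 0$, the conjugate $e_2^{2}we_2^{-2}$ is again primitive, and the relation $(e_2^{2}we_2^{-2})^k=e_2^{2}ye_2^{-2}\in\langle w\rangle$ combined with uniqueness of primitive roots forces $e_2^{2}we_2^{-2}=w^{\pm 1}$. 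I would then rule out the sign $-1$ via the auxiliary fact that no nontrivial element of a free group is inverted by conjugation: if $gug^{-1}=u^{-1}$ with $u\ne 1$, then $g^2$ centralizes $u$, a short calculation inside $C(u)$ yields $g\in C(u)$, and hence $gug^{-1}=u$, a contradiction. So $e_2^2$ centralizes $w$; since the primitive root of $e_2^2$ is $e_2$, this pins $w$ inside $\langle e_2\rangle$ and shows that $y$ is a power of $e_2$. The symmetric argument based on $[x^{e_1},y^{e_2}]=[x^{e_1^{-1}},y^{e_2}]=1$ shows that $x$ is a power of $e_1$.

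To finish I would write $x=e_1^p$ and $y=e_2^q$ with $p,q\ne 0$; then $[x^{e_1},y^{e_2}]=[e_1^p,e_2^q]\ne 1$ because $e_1,e_2$ freely generate a rank-two subgroup, contradicting the assumption. The step I expect to require the most care is the inversion-by-conjugation lemma together with the bookkeeping around primitive roots; everything else is essentially formal once the cyclic-centralizer principle for nonabelian free groups is in hand.
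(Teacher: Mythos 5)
Your proof is correct and complete. The paper states Lemma \ref{Disj} without providing a proof (unlike Lemma \ref{Conj}, for which it at least gives a hint), so there is no argument in the paper to compare against; your route via cyclic centralizers, uniqueness of primitive roots, and the no-inversion-by-conjugation lemma fills that gap cleanly. Two small remarks: first, your argument in fact only invokes three of the four commutators (you never use $[x^{e_1^{-1}},y^{e_2^{-1}}]$; the fourth is there only for symmetry of the statement); second, the ``short calculation inside $C(u)$'' deserves one more line in a written-up version, namely: from $g^2\in C(u)=\langle c\rangle$ and $g^2\neq 1$, both $g$ and $c$ lie in the cyclic group $C(g^2)$, so $g$ and $c$ commute, hence $g$ commutes with $u=c^n$, which forces $u=u^{-1}$ and so $u=1$ by torsion-freeness, a contradiction. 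With that expansion the proof reads exactly as it should.
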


In particular, one easily obtains the following corollary. 

\begin{corollary} \label{one_eq_free}
A  disjunction of conjunctions (or conjunction of disjunctions) of equations over $\F$ is equivalent, in $\mathbb F$, to one equation.
\end{corollary}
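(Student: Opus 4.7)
The plan is to derive the corollary by iterating Lemmas \ref{Conj} and \ref{Disj}. First I would argue, by induction on $k$, that any finite conjunction $\bigwedge_{i=1}^{k}(w_i(\bar{x})=1)$ of equations over $\F$ is equivalent in $\F$ to a single equation. The base case $k=2$ is exactly Lemma \ref{Conj}, applied with $x$ replaced by $w_1(\bar{x})$ and $y$ by $w_2(\bar{x})$; note that the substitution is legitimate because the statement of Lemma \ref{Conj} is universally quantified in $x,y$. For the inductive step, replace the first $k-1$ conjuncts by the single equivalent equation guaranteed by the inductive hypothesis and then apply Lemma \ref{Conj} once more.

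Next, by the same kind of induction on the number of disjuncts (using Lemma \ref{Disj} with substitutions for $x$ and $y$), any finite disjunction $\bigvee_{j=1}^{m}(v_j(\bar{x})=1)$ is equivalent in $\F$ to a finite conjunction of equations — concretely, a conjunction of commutator equations built from conjugates of the $v_j$'s. Combined with the first step, this conjunction collapses to a single equation, so every finite disjunction of equations is equivalent in $\F$ to one equation.

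With these two reduction steps in hand, consider a disjunction of conjunctions of equations $\varphi(\bar{x})=\bigvee_{i=1}^{N}\bigwedge_{j=1}^{k_i}(w_{i,j}(\bar{x})=1)$. Apply the first step inside each disjunct to convert each inner conjunction into an equivalent single equation $u_i(\bar{x})=1$. The resulting disjunction $\bigvee_{i=1}^{N}(u_i(\bar{x})=1)$ is, by the second step, equivalent to a single equation. Dually, a conjunction of disjunctions of equations is treated by first applying the second step to each inner disjunction, producing a conjunction of conjunctions of equations, which is just a longer conjunction of equations and collapses to a single equation by the first step.

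I do not expect any substantial obstacle: the entire argument is bookkeeping on top of Lemmas \ref{Conj} and \ref{Disj}. The only point requiring any care is checking that the universal quantification over $x$ and $y$ in those lemmas allows us to substitute arbitrary group words (in the variables $\bar{x}$ and in the constants $e_1,e_2,\ldots$) for $x$ and $y$; this is immediate, and the constants $e_1,e_2$ used in the base lemmas remain available throughout the induction.
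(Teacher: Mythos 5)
Your proof is correct and matches the intended argument: the paper gives no explicit proof of Corollary \ref{one_eq_free}, saying only that it follows easily from Lemmas \ref{Conj} and \ref{Disj}, and the iterated substitution scheme you describe is precisely the bookkeeping the authors have in mind. The one point you rightly flag — that the universal quantification in the two lemmas licenses substitution of arbitrary words over $\bar{x}$ and the constants — is the only thing that needed checking, and you handle it.
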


Since the above Corollary can be expressed by a universal formula we also get.   

\begin{corollary}\label{one_eq} 
A disjunction of conjunctions (or conjunction of disjunctions) of equations over $\Gamma$ is equivalent in $\Gamma$ to the same one equation as in $\mathbb F$.
\end{corollary}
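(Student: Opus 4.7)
The plan is to derive this directly from Theorem \ref{UnivwithCon1} by viewing the equivalence of Corollary \ref{one_eq_free} itself as a universal sentence with constants. Given a disjunction of conjunctions (or conjunction of disjunctions) of equations $P(\bar{x}, \bar{a})$ with coefficients $\bar{a}$ from $\F$, Corollary \ref{one_eq_free} produces a single equation $Q(\bar{x}, \bar{a}, e_1, e_2)$ such that $\F \models \forall \bar{x} (P(\bar{x}, \bar{a}) \leftrightarrow Q(\bar{x}, \bar{a}, e_1, e_2))$, where the auxiliary constants $e_1, e_2$ arise from the iterated application of Lemmas \ref{Conj} and \ref{Disj}.

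Next I would observe that this biconditional is a quantifier-free formula with parameters from $\F_n$ universally quantified over $\bar{x}$, hence precisely a universal sentence with constants in the sense of Theorem \ref{UnivwithCon1}. Applying that theorem immediately yields that the same sentence holds, with overwhelming probability, in the random group $\Gamma$, where every constant is interpreted via the canonical quotient $\pi_\ell$. This is exactly the claim of the corollary: the equation $Q$, with constants now read as elements of $\Gamma$, is equivalent in $\Gamma$ to the starting disjunction of conjunctions of equations.

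I do not anticipate any substantive obstacle, as the entire content is that a universal equivalence known in $\F$ transfers to $\Gamma$ via Theorem \ref{UnivwithCon1}. The only care needed is checking that the coefficients involved, both the original $\bar{a}$ and the auxiliary $e_1, e_2$, qualify as constants in the sense of the earlier definition; this is automatic, since by hypothesis they are images under $\pi_\ell$ of fixed elements of $\F_n$.
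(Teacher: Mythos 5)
Your proposal is correct and is essentially the paper's intended argument: the paper's one-line remark "Since the above Corollary can be expressed by a universal formula we also get" is precisely the observation that the equivalence of Corollary \ref{one_eq_free} is a universal sentence with constants, to which Theorem \ref{UnivwithCon1} applies. Your added care about the auxiliary constants $e_1, e_2$ being genuine constants (images of fixed elements of $\F_n$ under $\pi_\ell$) is a correct and appropriate check.
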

 
\begin{lemma}(cf. \cite[Lemma 6]{MR2154989}) \label{reduction}
Let $\tau$ be a $\forall\exists$ first-order sentence in the language of groups. Then, $\tau$ is equivalent in $\mathbb F_n$ to a sentence $\zeta$ of the form
 $$\forall\bar{x}\exists\bar{y} \bigl(\sigma(\bar{x},\bar{y}, \bar{a})=1 \land \psi(\bar{x},\bar{y}, \bar{a})\neq 1\bigr)$$  
where $\sigma(\bar{x},\bar{y}, \bar{a})=1$ is an equation and $\psi(\bar{x},\bar{y}, \bar{a})\neq 1$ is an inequation, both over constants from $\F_n$.

Moreover, if $d<1/16$, then $\tau$ is almost surely true in a random group of density $d$ if and only if the sentence $\zeta$ is.   

\end{lemma}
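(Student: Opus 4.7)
The plan is to establish the equivalence in $\F_n$ via algebraic manipulations of the quantifier-free matrix of $\tau$, and then to transfer each step of the reduction to a random group almost surely using Theorems \ref{UnivwithCon1} and \ref{UnivwithCon}.

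Write $\tau = \forall \bar x \exists \bar y \, \phi(\bar x, \bar y, \bar a)$ and put $\phi$ in disjunctive normal form. Within each disjunct, apply Corollary \ref{one_eq_free} to collapse the conjunction of equations into a single equation; dually, the conjunction of inequations is the negation of a disjunction of equations, which is again collapsed by Corollary \ref{one_eq_free} into one equation, yielding one inequation per disjunct. Thus $\phi$ is equivalent in $\F_n$ to $\bigvee_{i=1}^{r} \bigl(\Sigma_i(\bar x, \bar y, \bar a) = 1 \wedge \Psi_i(\bar x, \bar y, \bar a) \neq 1\bigr)$. Introducing fresh disjoint tuples $\bar y_1, \ldots, \bar y_r$ and using that $\exists \bar y (A \vee B)$ is equivalent to $\exists \bar y_1 \bar y_2 (A(\bar y_1) \vee B(\bar y_2))$, one rewrites $\tau$ as $\forall \bar x \, \exists \bar y_1 \ldots \bar y_r \, \bigvee_i \bigl(\Sigma_i(\bar x, \bar y_i, \bar a) = 1 \wedge \Psi_i(\bar x, \bar y_i, \bar a) \neq 1\bigr)$.

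The remaining and main obstacle is collapsing this disjunction into a single pair $(\sigma = 1 \wedge \psi \neq 1)$. Following \cite[Lemma 6]{MR2154989}, one introduces additional existentially quantified ``selector'' variables $\bar z$ and constructs $\sigma$ and $\psi$ via the Malcev and commutator tricks of Lemmas \ref{Conj} and \ref{Disj}: the equation $\sigma = 1$ forces, via an encoding weighted by $\bar z$, that some $\Sigma_i = 1$ holds with $\bar z$ identifying the active index, while $\psi \neq 1$ is crafted so as to enforce the matching $\Psi_i \neq 1$. Verifying the equivalence relies on free-group-specific arguments, notably the principle underlying Lemma \ref{Disj} that two elements of $\F_n$ commute only when related in a very restrictive way; the naive attempt of taking $\sigma$ to encode $\bigvee_i \Sigma_i = 1$ and $\psi$ to encode $\bigvee_i \Psi_i \neq 1$ fails (one can exhibit $\bar y_1, \bar y_2$ with $\sigma = 1 \wedge \psi \neq 1$ but no disjunct satisfied), so the coupling achieved by $\bar z$ is essential.

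For the ``moreover'' clause, observe that the reductions in the first stage are justified by biconditionals that are instances of \emph{universal} sentences over $\F_n$ with constants, true in $\F_n$; by Theorem \ref{UnivwithCon1} they are almost surely true in a random group of density $d<1/16$. The selector-variable step is of $\forall\exists$-shape and is transferred using Theorem \ref{UnivwithCon}: any witness in $\Gamma_\ell$ to an existential constituent of the reduction lifts to a witness in $\F_n$ with probability tending to $1$, and conversely images through $\pi_\ell$ of $\F_n$-witnesses remain witnesses in $\Gamma_\ell$. Chaining the finitely many steps yields $\Pr(\Gamma_\ell \models \tau \leftrightarrow \zeta) \to 1$ as $\ell \to \infty$, from which $\Pr(\Gamma_\ell \models \tau) \to 1 \iff \Pr(\Gamma_\ell \models \zeta) \to 1$ follows, completing the ``moreover'' clause.
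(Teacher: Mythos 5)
The opening steps (prenex DNF, collapsing the conjunctions of equations inside each disjunct via Corollary~\ref{one_eq_free}) match the paper, and you correctly identify the genuine obstacle: collapsing a disjunction of pairs $\Sigma_i=1\land\Psi_i\neq 1$ into a single pair $\sigma=1\land\psi\neq 1$ cannot be done naively. But at exactly this point your proposal only gestures at ``selector variables $\bar z$ identifying the active index'' and ``an encoding weighted by $\bar z$'' without constructing anything; that is not a proof, and the mechanism you describe does not match the one that actually works. The paper's trick is both simpler and different: introduce fresh tuples $\bar z_1,\ldots,\bar z_m$ (one per disjunct) and observe that in any nontrivial group the quantifier-free part is equivalent to
$\exists\bar z_1,\ldots,\bar z_m\bigl((\bigwedge_i \bar z_i\neq 1)\land \bigvee_i(\Sigma_i=1\land\Psi_i=\bar z_i)\bigr)$.
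The $\bar z_i$ do not ``identify the active index''; \emph{all} of them are required to be nontrivial, and $\bar z_i$ simply names the value of $\Psi_i$. This turns the inner disjunction into a disjunction of conjunctions of \emph{equations only}, so Corollary~\ref{one_eq_free} collapses it to one equation $\sigma(\bar x,\bar y,\bar z,\bar a)=1$, while the extracted conjunction $\bigwedge_i\bar z_i\neq 1$ collapses to one inequation $\psi(\bar z,\bar a)\neq 1$. This is the step that is missing from your argument.

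Your treatment of the ``moreover'' clause also misattributes what is needed where. The move from $\Psi_i\neq 1$ to $\exists\bar z_i(\Psi_i=\bar z_i\land \bar z_i\neq 1)$ is a logical equivalence valid in \emph{every} nontrivial group and needs no random-group input at all, so invoking Theorem~\ref{UnivwithCon} (solution lifting) for the ``selector-variable step'' is spurious. The only transfer required is that the biconditional
$\forall\bar x\forall\bar y\forall\bar z\bigl([\bigwedge_i\bar z_i\neq 1\land\bigvee_i(\Sigma_i=1\land\Psi_i=\bar z_i)]\leftrightarrow[\psi\neq 1\land\sigma=1]\bigr)$
is a \emph{universal} sentence with constants, true in $\F_n$, hence a.s. true in $\Gamma_\ell$ by Theorem~\ref{UnivwithCon1}. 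Chaining the elementary equivalence with this a.s.~true universal biconditional gives the a.s.~equivalence of $\tau$ and $\zeta$; Theorem~\ref{UnivwithCon} is never needed here.
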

\begin{proof} Every $\forall\exists$ sentence in the language of groups is (logically) equivalent to a formula in prenex (disjunctive) normal form 
$$\forall\bar x\exists \bar y\Bigl( \bigvee_{i=1}^m\bigl(\Sigma_i(\bar{x},\bar{y})=1 \land \Psi_i(\bar{x},\bar{y})\neq 1\bigr)\Bigr)$$
In any non-trivial group the quantifier-free part, $\bigvee_{i=1}^m\bigl(\Sigma_i(\bar{x},\bar{y})=1 \land \Psi_i(\bar{x},\bar{y})\neq 1\bigr)$, of the above sentence is equivalent to 
$$\exists \bar z_1,\ldots ,\bar z_m \bigl((\bigwedge_{i=1}^m \bar z_i\neq 1)\bigwedge\bigvee_{i=1}^m\bigl(\Sigma_i(\bar{x},\bar{y})=1 \land \Psi_i(\bar{x},\bar{y})= \bar z_i\bigr)$$ 
By Corollary \ref{one_eq_free}, the disjunction of conjunctions of equations 
$$\bigvee_{i=1}^m\bigl(\Sigma_i(\bar{x},\bar{y})=1 \land \Psi_i(\bar{x},\bar{y})=\bar z_i\bigr)$$
is equivalent to one equation $\sigma (\bar{x},\bar{y},\bar z_1,\ldots ,\bar z_m,\bar a)=1$ over constants in $\mathbb F_n$. Similarly, the conjunction $\bigwedge_{i=1}^m \bar z_i\neq 1$ is equivalent to a single inequation $\psi(\bar z_1,\ldots, \bar z_m,\bar a)\neq 1$ over constants in $\F_n$.  Hence, we can take for $\zeta$ the following sentence

$$\forall\bar x\exists \bar y \exists z_1, \ldots, z_m \bigl(\psi(\bar z_1, \ldots, \bar z_m,\bar a)\neq 1\bigwedge \sigma (\bar{x},\bar{y},\bar z_1,\ldots, \bar z_m,\bar a)=1 \bigr)$$

For a random group $\Gamma$ of density $d<1/16$ we argue as follows. The sentence $\forall\bar x\exists \bar y\Bigl( \bigvee_{i=1}^m\bigl($ $\Sigma_i(\bar{x},\bar{y})=1 \land \Psi_i(\bar{x},\bar{y})\neq 1\bigr)\Bigr)$ is almost surely true in $\Gamma$ if and only if $\forall \bar x \exists \bar y \exists \bar z_1,\ldots,\bar z_m \Bigl((\bigwedge_{i=1}^m \bar z_i\neq 1)\bigwedge\bigvee_{i=1}^m\bigl(\Sigma_i(\bar{x},\bar{y})=1 \land \Psi_i(\bar{x},\bar{y})= \bar z_i\bigr)\Bigr)$ is almost surely true in $\Gamma$. In addition, 
$$\F_n\models \forall\bar x\forall\bar y\forall\bar z\Bigl( \Big[(\bigwedge_{i=1}^m \bar z_i\neq 1)\bigwedge\bigvee_{i=1}^m\bigl(\Sigma_i(\bar{x},\bar{y})=1 \land \Psi_i(\bar{x},\bar{y})= \bar z_i\bigr)\Big]\leftrightarrow $$ $$\bigl(\psi(\bar z_1, \ldots, \bar z_m,\bar a)\neq 1\bigwedge \sigma (\bar{x},\bar{y},\bar z_1,\ldots,\bar z_m,\bar a)=1\bigr)\Bigr)$$

The above sentence, by Theorem \ref{UnivwithCon1}, is almost surely true in $\Gamma$. In particular, $\forall \bar x \exists \bar y \exists \bar z_1,\ldots, $ $\bar z_m\Bigl((\bigwedge_{i=1}^m z_i\neq 1)\bigwedge\bigvee_{i=1}^m\bigl(\Sigma_i(\bar{x},\bar{y})=1 \land \Psi_i(\bar{x},\bar{y})= \bar z_i\bigr)\Bigr)$ is almost surely true in $\Gamma$ if and only if $\forall\bar x\exists \bar y \exists \bar z_1, \ldots, \bar z_m \bigl(\psi(\bar z_1, \ldots, \bar z_m,\bar a)\neq 1\bigwedge \sigma (\bar{x},\bar{y},\bar z_1,\ldots,\bar z_m,\bar a)=1 \bigr)$ is almost surely true in $\Gamma$.
\end{proof}

\subsection {Validation of a $\forall\exists$ sentence in nonabelian free groups.}\label{Validation}

Let $\forall\bar{x}\exists\bar{y} \bigl(\Sigma(\bar{x},\bar{y}, \bar{a})=1 \land \Psi(\bar{x},\bar{y}, \bar{a})\neq 1\bigr)$,  
where $\Sigma(\bar{x},\bar{y}, \bar{a})=1$ is a conjuction of equations and $\Psi(\bar{x},\bar{y}, \bar{a})\neq 1$ a conjuction of inequations, be a true sentence in a nonabelian free group $\F:=\langle \bar a \rangle$. The idea, for validating the above sentence, is to find witnesses for the existentially quantified variables $\bar{y}$ in terms of the universally quantified variables $\bar{x}$ and the constants $\bar{a}$ as words in $\langle\bar{x}, \bar{a}\rangle$. Indeed, the first step of the validating process is based on the following theorem \cite[Theorem 1.2]{MR1972179}, \cite{MR2154989}:

\begin{theorem}\label{Merz}
Let $\mathbb{F}\models\forall\bar{x}\exists\bar{y}(\Sigma(\bar{x},\bar{y}, \bar{a})=1 \land \Psi(\bar{x},\bar{y}, \bar{a})\neq 1)$. Then, there exists a tuple of words $\bar{w}(\bar{x}, \bar{a})$ in the free group $\langle\bar{x}, \bar{a}\rangle$, such that $\Sigma(\bar{x},\bar{w}(\bar{x}, \bar{a}), \bar{a})$ is trivial in $\langle\bar{x}, \bar{a}\rangle$ and moreover $\mathbb{F}\models \exists\bar{x}\Psi(\bar{x},\bar{w}(\bar{x}, \bar{a}), \bar{a})\neq 1$. 
\end{theorem}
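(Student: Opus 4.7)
The plan is to reproduce the strategy used in the generalized Merzlyakov theorem of Kharlampovich-Myasnikov and Sela (cited just above the statement) via a test sequence followed by a relative shortening argument. Observe first that producing a tuple $\bar{w}(\bar{x},\bar{a})$ with $\Sigma(\bar{x},\bar{w}(\bar{x},\bar{a}),\bar{a})=1$ in $\langle \bar{x},\bar{a}\rangle$ is the same as producing a retraction of the \emph{coordinate group} $G:=\langle \bar{x},\bar{a},\bar{y}\mid \Sigma(\bar{x},\bar{y},\bar{a})\rangle$ onto its subgroup $F:=\langle \bar{x},\bar{a}\rangle$ that fixes $\bar{x}$ and $\bar{a}$. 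The existential inequation then adds the further requirement that the formal value of $\Psi$ be non-trivial in $F$, which immediately provides a witness in $\mathbb{F}$ after any embedding $F\hookrightarrow\mathbb{F}$.

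First I would fix an embedding $F\hookrightarrow\mathbb{F}$ extending the identity on $\bar{a}$, and build a test sequence of homomorphisms $h_n:G\to\mathbb{F}$ fixing $\bar{a}$. The restriction $h_n|_F$ is chosen so that $\bar{x}\mapsto \bar{x}_n$ for a tuple $\bar{x}_n$ of increasingly ``stretched'' generic elements (in the sense of Sela's test sequences), and then the hypothesis supplies $\bar{y}_n:=h_n(\bar{y})\in\mathbb{F}$ with $\Sigma(\bar{x}_n,\bar{y}_n,\bar{a})=1$ and $\Psi(\bar{x}_n,\bar{y}_n,\bar{a})\neq 1$. Within its orbit under the modular group of $G$ relative to $F$ (coming from the cyclic JSJ decomposition of $G$ over $F$), replace $h_n$ by a shortest representative. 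Passing to a Bestvina-Paulin limit produces a limit quotient $L$ of $G$, acting on an $\mathbb{R}$-tree, in which $F$ is elliptic.

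Next I would execute the shortening argument relative to $F$. By the Rips machine, any non-degenerate piece of the $\mathbb{R}$-tree action (simplicial, axial, or surface-type) supplies a non-trivial modular automorphism of $G$ relative to $F$ that strictly shortens the $h_n$ on a generating set, contradicting the minimality assumption. Hence the action is trivial, $L$ embeds into $F$, and the composition $G\twoheadrightarrow L\hookrightarrow F$ is the desired retraction. Setting $\bar{w}(\bar{x},\bar{a})$ to be the image of $\bar{y}$ gives $\Sigma(\bar{x},\bar{w}(\bar{x},\bar{a}),\bar{a})=1$ in $F$. The inequation then transfers by specialization: if $\Psi(\bar{x},\bar{w}(\bar{x},\bar{a}),\bar{a})$ were the trivial word in $F$, it would be trivial at every specialization $\bar{x}=\bar{x}_n$, but a sufficiently generic test sequence produces $\bar{y}_n$-values that coincide (after the modular minimization) with $\bar{w}(\bar{x}_n,\bar{a})$, contradicting $\Psi(\bar{x}_n,\bar{y}_n,\bar{a})\neq 1$; hence $\Psi(\bar{x},\bar{w}(\bar{x},\bar{a}),\bar{a})\neq 1$ in $F$, and any embedding $F\hookrightarrow\mathbb{F}$ yields the desired $\bar{x}$ witnessing $\mathbb{F}\models \exists\bar{x}\,\Psi(\bar{x},\bar{w}(\bar{x},\bar{a}),\bar{a})\neq 1$.

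The main obstacle is the relative shortening argument: one has to set up the cyclic JSJ decomposition of $G$ over $F$ correctly, control those modular automorphisms of $G$ that fix $F$ pointwise (in particular the ones supported on surface and abelian vertex groups), and verify that the minimality choice forces the limit $L$ to actually factor through $F$ rather than through some strictly larger limit group over $F$. This is the technical core of Merzlyakov-type theorems, and the passage from the equational statement to one that also incorporates the inequation $\Psi\neq 1$ only demands additional care in choosing the test sequence generically enough to certify $\Psi\neq 1$ at the formal level.
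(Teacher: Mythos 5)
The paper does not prove this statement: it is quoted verbatim from Sela's ``Diophantine geometry over groups II'' (Theorem 1.2) and Kharlampovich--Myasnikov, so there is no in-paper argument to compare yours against. Your framework (coordinate group $G$, test sequence $\bar x_n$, Bestvina--Paulin limit, shortening relative to $F$) is the correct one, and it is the strategy used in the cited sources; but the step you rely on to close the argument is not correct.

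The crux sentence, ``Hence the action is trivial, $L$ embeds into $F$,'' does not follow. After rescaling, the limit tree is non-degenerate by construction, so ``trivial'' has to mean ``no shortenable pieces.'' But the shortening argument eliminates axial, surface-type, and Dehn-twistable simplicial pieces; it does \emph{not} eliminate simplicial edges with trivial edge stabilizers, and these survive precisely because there is no relative modular automorphism that shortens a free splitting. So the generic outcome is not $L\hookrightarrow F$ but a graph-of-groups/free-product/tower structure of $L$ over $F$. The formal solution $\bar w$ is obtained by \emph{choosing} a retraction of that structure onto $F$, and one must separately use the test-sequence properties of $\bar x_n$ to show that the floors above $F$ can indeed be retracted (this is exactly what you defer to as the ``technical core,'' but as written you assert the conclusion rather than argue for it). The relativized statement (Theorem~\ref{MerzTowers}) makes the failure of your claim visible: there the limit group does \emph{not} retract onto the base, and one is forced to pass to a covering closure.

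The handling of the inequation is also not right. You argue that $\bar y_n$ ``coincide (after the modular minimization) with $\bar w(\bar x_n,\bar a)$,'' but Bestvina--Paulin convergence only gives $d\bigl(\bar y_n,\bar w(\bar x_n,\bar a)\bigr)=o(\mu_n)$ for the rescaling constants $\mu_n$; it does not force equality, so $\Psi(\bar x_n,\bar y_n,\bar a)\neq 1$ does not transfer to $\Psi(\bar x_n,\bar w(\bar x_n,\bar a),\bar a)\neq 1$ by specialization. What actually secures the second conclusion of the theorem is the freedom in the choice of the retraction: when $L$ splits as a free product over $F$ (or carries a tower structure), the retraction onto $F$ can be chosen generically to avoid the proper subvariety $\Psi=1$. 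That genericity argument is a substantive and separate ingredient, not a corollary of the limit construction.
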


In the special case where no inequations exist, Theorem \ref{Merz} is known as {\em Merzlyakov's theorem} and leads to the equality of the positive theories of nonabelian free groups. We think of  $\bar{w}(\bar{x},\bar{a})$ as validating the sentence in a particular subset of $\mathbb{F}^{\abs{\bar{x}}}$. What is left to do is find validating witnesses for the complement of this subset. The subset of $\mathbb{F}^{\abs{\bar{x}}}$ for which the formal solution {\bf does not} work is first-order definable by the union of the following ``varieties" $\psi_1(\bar{x},\bar{w}(\bar{x}, \bar{a}), \bar{a})=1, \ldots, \psi_k(\bar{x},\bar{w}(\bar{x}, \bar{a}),\bar{a})=1 $, where each $\psi_i(\bar{x},\bar{y}, \bar{a})$, for $i\leq k$, is a word in  $\Psi(\bar{x},\bar{y},\bar{a})$. One can further split each variety $\psi_i(\bar{x},\bar{w}(\bar{x}, \bar{a}))$ in finitely many irreducible varieties, i.e. systems of equations $\Sigma_{i1}(\bar{x},\bar{a})=1, \ldots, \Sigma_{im_i}(\bar{x},\bar{a})=1$  for $i\leq k$, such that $L_{ij}:=\langle \bar{x},\bar{a} \ | \ \Sigma_{ij}(\bar{x}, \bar{a})\rangle$, for $i\leq k$ and $j\leq m_i$, is a (restricted) limit group. 

The iterative step of the process uses a further generalization of Merzlyakov's theorem that we record next (see \cite[Theorem 1.18]{MR1972179}, \cite{MR2154989}). For convenience of notation we denote by $G(\bar{x})$ a group $G$ with generating set $\bar{x}$.

\begin{theorem}\label{MerzTowers}
Let $L(\bar{x},\bar{a}):=\langle \bar{x}, \bar{a} \ | \ R(\bar{x},\bar{a})\rangle$ be a restricted limit group, and $T(\bar{x},\bar{z}, \bar{a})$ a tower constructed from a well-structured resolution of $L(\bar x, \bar a)$.  

Suppose 
$$\mathbb{F}\models \forall\bar{x} (R(\bar{x},\bar{a})=1 \rightarrow \exists \bar{y}(\Sigma(\bar{x},\bar{y},\bar{a})=1 \land \Psi(\bar{x},\bar{y},\bar{a})\neq 1))$$
Then there exist,  $C_1(\bar{x},\bar{z},\bar{s},\bar{a})$, \ldots, $C_q(\bar{x},\bar{z},\bar{s},\bar{a})$ a covering closure of $T(\bar{x},\bar{z},\bar{a})$ and ``formal solutions",  $\bar{w}_1(\bar{x},\bar{z},\bar{s},\bar{a}), \ldots, \bar{w}_q(\bar{x},\bar{z},\bar{s},\bar{a})$ with the following properties: 
\begin{itemize}
\item for each $1\leq i \leq q$, the words $\Sigma(\bar{x}, \bar{w}_i(\bar{x},\bar{z},\bar{s},\bar a),\bar a)$ are trivial in $C_i(\bar{x},\bar{z},\bar{s},\bar a)$. 
\item for each $1\leq i\leq q$, there exists a morphism $h_i:C_i\rightarrow \mathbb{F}$, which is the identity on $\F$ and such that $\Psi(h_i(\bar{x}), h_i(\bar{w}_i),\bar{a})\neq 1$. 
\end{itemize}
\end{theorem}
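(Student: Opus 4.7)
The plan is to combine a test-sequence argument with an iterative Merzlyakov-type construction, following the strategy pioneered by Kharlampovich--Myasnikov and Sela.

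First, I would fix a test sequence $\{\eta_n\}$ of specializations of the tower $T(\bar x, \bar z, \bar a)$ into $\F$. Roughly, these are specializations that are generic with respect to the JSJ structure of each level of the tower (quadratic and abelian flats are pushed far out into the action, freely decomposable parts are separated, etc.), and every specialization of $T$ asymptotically resembles one of them. In particular each $\eta_n$ restricts to a homomorphism $L(\bar x, \bar a)\to \F$ that is the identity on $\F$ and satisfies $R(\bar x, \bar a)=1$. Applying the hypothesis to each $\eta_n$ yields tuples $\bar y_n$ with $\Sigma(\eta_n(\bar x), \bar y_n, \bar a)=1$ and $\Psi(\eta_n(\bar x), \bar y_n, \bar a)\neq 1$.

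Next, I would push the sequence through the standard Bestvina--Paulin machine. Passing to a subsequence, the combined maps $\langle T(\bar x, \bar z, \bar a), \bar y\rangle\to \F$ sending $\bar y\mapsto \bar y_n$ converge to an action on a real tree, producing a limit group $L^{*}$ that factors $\Sigma(\bar x, \bar y, \bar a)=1$. This is where a Merzlyakov-style ``formal solution'' argument enters: using the action on the tree together with the tower structure of $T$, one shows that $L^{*}$ embeds into (or maps onto a quotient of) a closure $C_1(\bar x, \bar z, \bar s, \bar a)$ of $T$ in which a tuple of words $\bar w_1(\bar x, \bar z, \bar s, \bar a)$ realizes $\bar y$; then $\Sigma(\bar x, \bar w_1, \bar a)=1$ is automatic in $C_1$. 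Moreover every $\eta_n$ (outside finitely many) factors through $C_1$ via some $h_n$ for which $\Psi(h_n(\bar x), h_n(\bar w_1),\bar a)\neq 1$, giving the desired $h_1$.

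The real work is the iteration and the ``covering'' aspect. The closure $C_1$ only absorbs test sequences whose ultraproduct limit is the one produced above; other test sequences (or equivalently, other families of specializations of $T$) give rise to different limit actions and hence to different closures $C_2,\ldots, C_q$, each carrying its own formal solution $\bar w_i$ and homomorphism $h_i$. To organize this, I would run the process inductively on the collection of specializations of $T$ not yet covered: at each step remove the Zariski-closed family hit by the previously-constructed closures and restart with a new test sequence of what remains. Termination — which will be the main obstacle — relies on Sela's (respectively Kharlampovich--Myasnikov's) descending chain condition for limit groups / resolutions and on the fact that each step strictly drops complexity in a well-ordered invariant of the resolution. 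Once termination is established, the list $C_1,\ldots, C_q$ forms, by construction, a covering closure of $T(\bar x, \bar z, \bar a)$, and the formal solutions $\bar w_i$ together with the maps $h_i$ verify the two bulleted properties.
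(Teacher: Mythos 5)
This theorem is not proved in the paper: it is quoted verbatim from \cite[Theorem 1.18]{MR1972179} and from \cite{MR2154989}, so there is no in-paper proof to compare your attempt against. What I can do is assess your sketch against the argument in those sources.

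Your outline does capture the overall shape of Sela's (and Kharlampovich--Myasnikov's) proof: run a test sequence over the tower, invoke the hypothesis to produce witnesses, pass to a Bestvina--Paulin limit, extract a closure together with a formal solution, and iterate until the closures cover all specializations of $T$. The genuine gap sits inside the sentence where you assert that ``using the action on the tree together with the tower structure of $T$, one shows that $L^*$ embeds into (or maps onto a quotient of) a closure $C_1$ of $T$ in which a tuple of words $\bar{w}_1$ realizes $\bar{y}$.'' That is essentially the entire content of the theorem, and as written it is a restatement, not an argument. The reason the limit must be a closure of $T$ — and not an arbitrary limit quotient losing the tower structure — is a nontrivial structural fact about \emph{graded} test sequences: a test sequence is constructed floor by floor so that surface floors are stretched pseudo-Anosov-style and abelian floors are twisted unboundedly, and a level-by-level Rips analysis of the limiting $\mathbb{R}$-tree action then forces the extending tuple $\bar{y}$ to be expressible as a word $\bar{w}$ in the tower generators after passing to finite index supergroups of the abelian flats (i.e., exactly a closure). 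Without this analysis you have not justified that $\Sigma(\bar{x},\bar{w}_1,\bar{a})=1$ holds \emph{identically} in $C_1$ rather than merely along the chosen test sequence, nor that the remaining relations of $L^*$ are absorbed by the closure.

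A secondary gap: your iteration removes ``the Zariski-closed family hit by the previously-constructed closures'' and restarts with a test sequence of ``what remains,'' but the set of specializations of $T$ not factoring through the existing $C_i$'s is not a priori the morphism set of a tower, so it is not clear a fresh test sequence exists for it. In the cited sources this is handled by working relative to a finite collection of auxiliary resolutions and invoking a descending chain condition adapted to completed resolutions, which is what actually forces $q$ to be finite and the resulting $C_1,\ldots,C_q$ to be a \emph{covering} closure. Naming ``a well-ordered complexity invariant'' is the right instinct, but that invariant (built from the tower's abelian and QH data) has to be specified before the termination claim carries weight.
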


In principle formal solutions do not exist in arbitrary limit groups, but only in limit groups that admit a special structure - the structure of a tower. A tower is constructed as a nested graph of groups based on a free group and ``gluing" at each step either a surface with boundaries (along the boundaries) or a free abelian group along a direct factor. The covering closure of a tower is a finite set of towers where the free abelian ``floors" are augmented to finite index supergroups (see \cite[Definition 1.15]{MR1972179}). It covers the original tower in the sense that every morphism from the original tower to a free group, extends (as a closure contains the tower as a subgroup) to a morphism from some closure. The precise construction is of no practical importance for this paper, but we note that as a consequence the (new) subset of $\mathbb{F}^{\abs{x}}$ for which the (new) formal solution {\bf does not work} is not a union of varieties anymore, but is {\bf contained} in a union of Diophantine sets. This subtlety makes the proof that the process terminates quite involved. In any case, the important fact for us is that after repeatedly using the above theorem we will eventually cover all of $\mathbb{F}^{\abs{x}}$ with finitely many subsets for which some formal solution works.

We will give some more details on the procedure avoiding the technical results that imply its termination after finitely many steps. It will be important to carefully collect the ``bad" tuples in a set, in such a way that the procedure terminates. This set will occasionally be larger than needed, i.e. it will also contain tuples for which a formal solution already works, but this is unavoidable, under the existing methods, if one wants to guarantee the termination. We next explain how this works. For presentational purposes we will first record a special case, called {\em the minimal rank case}.
\ \\ \\
{\bf The minimal rank case.} The assumption in this case is that all (restricted) limit groups $L_{ij}$, for $i\leq k$ and $j\leq m_i$, that collect the ``bad" tuples in the first step of the procedure {\bf do not admit} a surjection (that is the identity on $\F$) to a free product $\F*\F_n$, for some nontrivial free group $\F_n$. This simplified version of the iterative procedure is presented, for example,  in \cite[Section 1]{Sela4}. This assumption considerably simplifies the technicalities in the construction of the towers on each consecutive step and guarantees that their complexity decreases. 

\begin{enumerate}
\item In the first step of the procedure we apply Theorem \ref{Merz} to the sentence $\forall\bar{x}\exists\bar{y}(\Sigma(\bar{x},\bar{y}, \bar{a})=1 \land \Psi(\bar{x},\bar{y}, \bar{a})\neq 1)$ and obtain a formal solution $\bar{w}(\bar{x},\bar{a})$ that {\bf does not work} in the union of varieties $\psi_1(\bar{x},\bar{w}(\bar{x},\bar{a}),\bar{a})=1, \ldots, \psi_k(\bar{x},\bar{w}(\bar{x}, \bar{a}),\bar{a})=1$.
\item The latter union of varieties can be further decomposed as $Hom_{\F}(L_{ij},\F)$, for $i\leq k$ and $j\leq m_i$, where $L_{ij}$ is a restricted limit group, and $Hom_{\F}(L_{ij},\F)$ is the set of restricted homomorphisms from $L_{ij}$ to $\F$. Equivalently, this can be seen as a decomposition of each variety to its irreducible components. 
\item In the iterative step of the procedure we work with each $L_{ij}$ in parallel. Thus, we can fix  $L:=L_{ij}$ for some $i\leq k$ and $j\leq m_i$. To each (restricted) limit group, $L$, we can assign finitely may towers (based on well-structured resolutions of $L$), $T_1, \ldots, T_{\ell}$, such that any (restricted) morphism $h:L\rightarrow\F$ factors through $T_i$, for some $i\leq\ell$. We will, again, work in parallel with each $T_i$. Thus, we can fix $T:=T_i$, for some $i\leq \ell$.
\item  We apply Theorem \ref{MerzTowers} for the couple $L$ and $T$ and obtain finitely many closures $C_1(\bar x, \bar z, \bar s, \bar a), \ldots C_q(\bar x, \bar z, \bar s, \bar a)$ and formal solutions $\bar{w}_1(\bar x, \bar z, \bar s, \bar a), \ldots, \bar{w}_q(\bar x, \bar z, \bar s, \bar a)$ for each closure, that each {\bf does not work} in the definable set 
$$\exists \bar z,\bar s\bigl(\Sigma_{C_i}(\bar{x}, \bar{z}, \bar{s}, \bar{a})=1\bigr)\land \forall \bar z, \bar s \bigl(\psi_1(\bar x, \bar w_i(\bar x, \bar z, \bar s, \bar a) , \bar a)=1\lor\ldots\lor\psi_k(\bar x, \bar w_i(\bar x, \bar z, \bar s, \bar a) , \bar a)=1)\bigr)$$
\item We do not continue with the previous definable set, but rather with the larger set defined by 
$$\exists \bar z, \bar s \biggl(\Sigma_{C_i}(\bar{x}, \bar{z}, \bar{s}, \bar{a})=1\bigwedge\bigl(\psi_1(\bar x, \bar w_i(\bar x, \bar z, \bar s, \bar a) , \bar a)=1\lor\ldots\lor\psi_k(\bar x, \bar w_i(\bar x, \bar z, \bar s, \bar a) , \bar a)=1\bigr)\biggr)$$
\item We work with each closure in parallel. Thus we fix $C(\bar x, \bar z, \bar s, \bar a):=C_i(\bar x, \bar z, \bar s, \bar a)$ for some $i\leq q$. We consider the set of (restricted) morphisms, $Hom_{\F}(C,\F)$, whose images satisfy $\psi_1(\bar x, \bar w_i(\bar x, \bar z, \bar s, \bar a) , \bar a)=1\lor\ldots\lor\psi_k(\bar x, \bar w_i(\bar x, \bar z, \bar s, \bar a) , \bar a)=1$ in $\F$. There exist finitely many (restricted) limit groups, $QL_1, \ldots, QL_m$,   which are quotients of $C$ and such that a (restricted) morphism $h:C\rightarrow\F$ satisfies the previous condition if and only if it factors through one of this quotients. 
\item We repeat the procedure for each $QL_i$ and towers based on resolutions constructed as in \cite[Proposition 1.13]{Sela4}, by Theorem 1.18 in \cite{Sela4} the procedure terminates after finitely many steps.
\end{enumerate}
 
\ \\ \\
We next record the general case.  
\ \\ \\
{\bf The general case.} In the general case not only the resolutions (used to construct towers) are modified but also the family of morphisms that descend to the next step of the procedure. Instead of well-structured resolutions we restrict to a special subclass called {\em well-separated resolutions} (see \cite[Definition 2.2]{Sela4}). One of the properties of well-separated resolutions is that they can be used to endow each surface that corresponds to a $QH$ subgroup of every $JSJ$ decomposition along the resolution with a family of (two-sided, disjoint, non null-homotopic, non parallel) simple closed curves. This family of curves induces a splitting, as a graph of groups, of the fundamental group of the surface by cutting it along them.    

\begin{definition} 
Let $\Sigma$ be a compact surface. Given a homomorphism $h:\pi _1(\Sigma)\rightarrow H$
a family of pinched curves is a collection $\mathcal C$ of disjoint, non-parallel, two-sided
simple closed curves, none of which is null-homotopic, such that the fundamental
group of each curve is contained in $ker h$ (the curves may be parallel to a boundary component).

The map $\phi$  is {\em non-pinching} \index{Non-pinching map} if there is no pinched curve: $\phi$ is injective in restriction to
the fundamental group of any simple closed curve which is not null-homotopic.
\end{definition}

If $\eta_{i+1}:L_i\rightarrow L_{i+1}$ is part of a well-separated resolution and $\pi_1(\Sigma):=Q$ is a $QH$ subgroup in a $JSJ$ decomposition of a freely indecomposable free factor of $L_i$, then to $\eta_{i+1}$ and the free decomposition of $L_{i+1}$ corresponds a family of pinched curves on $\Sigma$ that we will use in order to define a special class of morphisms called {\em taut with respect to the given resolution} (see \cite[Definition 2.4]{Sela4}). 

\begin{remark}
When a morphism $h:L\rightarrow\F$ factors through a resolution, $L:=L_0\rightarrow L_1\rightarrow L_2\rightarrow \ldots \rightarrow L_p$ then for each $i\leq p$, there exists a section with respect to the quotient map $\eta_i:L_{i-1}\rightarrow L_i$, for $0<i\leq p$, i.e. a morphism, $h_i:L_i\rightarrow\F$, such that $h=h_i\circ\eta_i\circ\alpha_{i-1}\circ\ldots\circ\eta_1\circ\alpha_0$, where $\alpha_i$ is in $Mod(L_i)$. 

For economy of notation when we say that $h$ (does not) kill an element of $L_i$ we mean that $h_i$ (does not) kill it.
\end{remark}

\begin{definition}[Taut morphism]\label{Taut}
Let $h$ be a morphism that factors through a well-separated resolution 
$$ L:=L_0\rightarrow L_1\rightarrow L_2\rightarrow \ldots \rightarrow L_p$$

Then $h$ is taut with respect to the given resolution if the following conditions hold:
\begin{itemize}
\item every abelian edge group in each abelian decomposition at every level of the resolution is not killed by $h$.
\item every $QH$ subgroup in each abelian decomposition at every level of the  resolution does not have an abelian image under $h$.
\item for any vertex group of the graph of groups decomposition of a $QH$ subgroup of any $JSJ$ decomposition in any level of the resolution, obtained by the family of pinched curves, the morphism, $h\circ \alpha$, i.e. $h$ pre-composed by any modular automorphism of the $QH$ subgroup, restricted to this vertex group is non-pinching.  


\end{itemize}
\end{definition}

A (restricted) limit group $L$ admits a finite family of well-separated resolutions, called the {\em taut Makanin-Razborov diagram of $L$}, such that any (restricted) morphism from $L$ to $\F$ factors through a well-separated resolution and moreover it is compatible with its structure (see \cite[Proposition 2.5]{Sela4}).

We next refine the class of taut morphisms by those that are in {\em shortest form}.

\begin{definition}[Shortest form morphism]\label{Minimal}
Let $Res$ be a well-separated resolution of a limit group $L:=L_0$: 
$$ L_0\rightarrow L_1\rightarrow L_2\rightarrow \ldots \rightarrow L_p$$

For each $i\leq p$, let $Comp(L_i)$ be the completion up to the step $i$ (see \cite[Definition 1.12]{MR1972179}), and  consider the multi-graded Makanin-Razborov diagram (see \cite[Section 12]{MR1863735}) of $Comp(L_{i+1})$ with respect to the images of the rigid subgroups and edge groups of the JSJ decompositions of each freely indecomposable free factor of $L_i$. 

For each morphism $h:L\rightarrow\mathbb{F}$  that factors through $Res$, there exists a sequence of morphisms $h_0, h_1, \ldots, h_p$ from $Comp(L_i)$ to $\mathbb{F}$ which are defined as sections with respect to the canonical epimorphisms $\pi_i:Comp(L)\rightarrow Comp(L_i)$, i.e. $h=h_i \circ \pi_i$, $1\leq i\leq p$. 

We say that a taut morphism, $h:L\rightarrow \mathbb{F}$, with respect to the given resolution, is in shortest form with respect to $Res$, if  every $h_i$, $1\leq i\leq p$, as above is shortest with respect to the modular automorphism of the solid (or rigid) limit group it factors out from, in the multi-graded Makanin-Razborov diagram it factors through.   
\end{definition}

Note that any morphism of a given (restricted) limit group can be extended to a (taut)  shortest form morphism that factors through a completed resolution in the taut Makanin-Razborov diagram. 

\begin{figure}[ht!]
\centering
\includegraphics[width=1\textwidth]{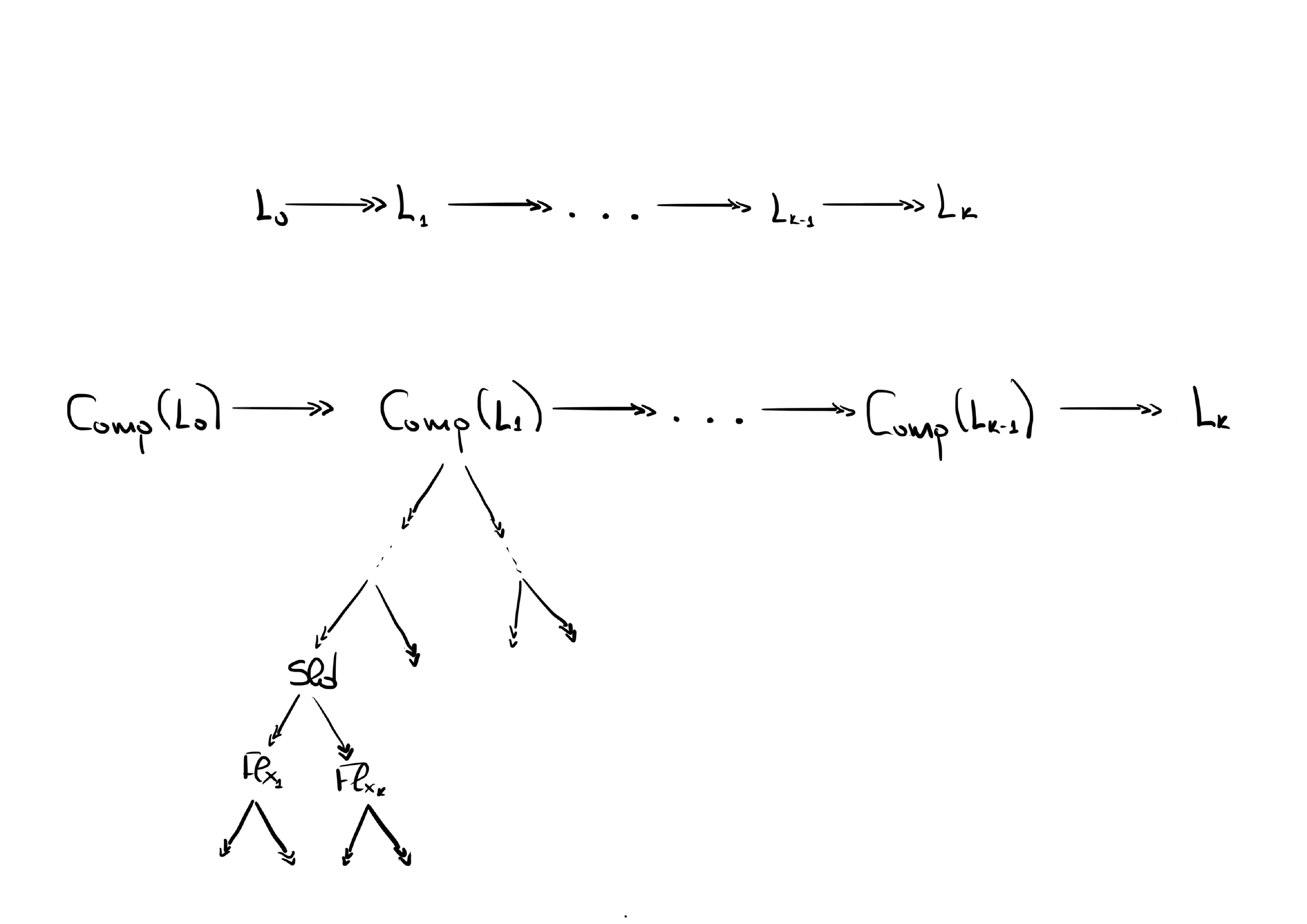}
\caption{A resolution \& a multi-graded Makanin-Razborov diagram of a completed limit group.}
\label{D}
\end{figure}

Having defined taut shortest form morphisms we are now ready to briefly describe the process of the validation of a $\forall\exists$ first-order sentence which is true in a nonabelian free group. 

We perform the same steps (1)-(3) that were described for the minimal rank case. In item (3) we only consider taut well structured resolutions.
 
In items (5) and (6) we now require the values for the variables $\bar z, \bar s$ to be images of (taut) shortest form morphisms for the fixed resolution we work with. 
 
Item (6) differs in a yet another way, it may happen that the natural image of $L_i$ in $QL_i$ is a proper quotient of $L_i$. In this case we go back to item (3) and work with this proper quotient instead of $L_i$. Otherwise we go to item (7).
 
For item (7) the construction of towers and well-separated resolutions for each $QL_i$ is much more complicated than in the minimal rank case. It is described, for example, in \cite[Section 4]{Sela4}.  Here is where we essentially use that specializations of $\bar z, \bar s$ are shortest form specializations (see \cite[Proposition 4.3]{Sela4}). For the purposes of this paper it is enough to know that this process terminates after finitely many steps (see \cite[Theorem 4.12]{Sela4}, \cite[Theorem 3.5]{KhSkBook}), the precise construction is not essential.

\section{Main Theorem}

We are now ready to prove: 

\begin{theorem}\label{AE1}
Let $d<1/16$. Then any $\forall\exists$-sentence (with coefficients) that is true in a nonabelian free group $\mathbb{F}$ is true with overwhelming probability in a random group of density $d$.
\end{theorem}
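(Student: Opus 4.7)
The plan is to reduce the sentence to normal form, use the validation procedure in $\F$ to produce explicit formal solutions covering $\F^{|\bar{x}|}$, and transfer them to $\Gamma_\ell$ via the universal transfer results Theorems \ref{UnivwithCon} and \ref{UnivwithCon1}. First, by Lemma \ref{reduction}, I may replace the given $\forall\exists$-sentence with an equivalent one of the form $\forall\bar{x}\exists\bar{y}(\sigma(\bar{x},\bar{y},\bar{a})=1\land\psi(\bar{x},\bar{y},\bar{a})\neq 1)$, and this reduction preserves truth in $\Gamma_\ell$ w.o.p. Since the sentence is true in $\F$, the validation procedure of subsection \ref{Validation} yields finitely many closures $C_1,\ldots,C_r$ of well-separated towers and formal solutions $\bar{w}_1,\ldots,\bar{w}_r$ satisfying (i) $\sigma(\bar{x},\bar{w}_i(\bar{x},\bar{z},\bar{s},\bar{a}),\bar{a})$ is trivial as an element of each $C_i$, so the equation $\sigma=1$ holds identically in every quotient of $C_i$, and (ii) every $\bar{b}\in\F^{|\bar{x}|}$ extends via some $\bar{c},\bar{d}\in\F$ to a specialization of some $C_i$ on which $\psi(\bar{b},\bar{w}_i(\bar{b},\bar{c},\bar{d},\bar{a}),\bar{a})\neq 1$ in $\F$.

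Given any $\bar{b}_\ell\in\Gamma_\ell^{|\bar{x}|}$, I would lift to $\bar{b}\in\F_n$, apply the validation to extract a branch $i$ and auxiliary data $(\bar{c},\bar{d})$, and push everything down through $\pi_\ell$. Setting $\bar{y}_\ell:=\bar{w}_i(\bar{b}_\ell,\pi_\ell(\bar{c}),\pi_\ell(\bar{d}),\bar{a}_\ell)$, the equation $\sigma(\bar{b}_\ell,\bar{y}_\ell,\bar{a}_\ell)=1$ is automatic by (i). The hard part is the inequation $\psi(\bar{b}_\ell,\bar{y}_\ell,\bar{a}_\ell)\neq 1$, since the specific nontrivial element of $\F_n$ witnessing $\psi\neq 1$ in $\F$ may well be killed by $\pi_\ell$. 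To handle this I would argue contrapositively: if the sentence failed at $\bar{b}_\ell$ in $\Gamma_\ell$, then for every branch $i$ and every solution $(\bar{c}_\ell,\bar{d}_\ell)$ of $C_i(\bar{b}_\ell,\bar{z},\bar{s},\bar{a}_\ell)=1$ in $\Gamma_\ell$, the witness $\bar{w}_i(\bar{b}_\ell,\bar{c}_\ell,\bar{d}_\ell,\bar{a}_\ell)$ must yield $\psi=1$. Theorem \ref{UnivwithCon} lifts all such specializations of the $C_i$ from $\Gamma_\ell$ to $\F_n$ with overwhelming probability, and the boolean combination tools (Lemmas \ref{Conj}, \ref{Disj} and Corollaries \ref{one_eq_free}, \ref{one_eq}) let me package the disjunction over the finitely many branches together with the conjunctions of closure equations and witness inequations into a single equation/inequation pair. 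The resulting universal sentence over constants in $\F_n$ can then be transferred via Theorem \ref{UnivwithCon1}, producing a failure in $\F$ that flatly contradicts validation property (ii).

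The main obstacle I expect is executing the packaging step cleanly: the existential/disjunctive structure of the validation (``some branch works, for some auxiliary values'') must be converted into a genuinely universal statement over $\F_n$, and two subtleties must be handled. First, the auxiliary variables $\bar{z},\bar{s}$ of each closure $C_i$ are not existential witnesses of the original sentence but structural parameters of the tower; the tower and well-separated resolution structure developed in subsection \ref{Validation} must be exploited to relate their range in $\Gamma_\ell$ to their range in $\F_n$ and to ensure the lift produced by Theorem \ref{UnivwithCon} respects the closure relations branch by branch. Second, $\bar{b}_\ell$ is itself a variable rather than a constant, so Theorems \ref{UnivwithCon} and \ref{UnivwithCon1} must be applied with the full tuple $(\bar{x},\bar{z},\bar{s})$ taken as the variable set and $\bar{a}$ as the only constants, which is compatible with their statements but requires care in identifying specializations across $\F_n$ and $\Gamma_\ell$. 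Provided these points can be assembled into a single universal sentence over $\F_n$ with constants, Theorem \ref{UnivwithCon1} delivers the desired transfer in one stroke.
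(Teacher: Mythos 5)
Your high-level plan (reduce to normal form via Lemma~\ref{reduction}, run the validation procedure in $\F$, then transfer via the universal transfer theorems) mirrors the paper's, but the crucial transfer step is left unresolved and, as sketched, cannot be made to work. You want to package the contrapositive --- ``the sentence fails at $\bar b_\ell$'' --- into a single universal sentence over $\F_n$ with constants and push it through Theorem~\ref{UnivwithCon1}. This does not go through: the failure at $\bar b_\ell$ is a $\Pi_1$-statement with $\bar b_\ell$ as a genuine parameter ranging over $\Gamma_\ell$, and promoting $\bar x$ to a universally quantified variable changes the statement (it would assert failure at every tuple, which is manifestly false and useless), while existentially quantifying $\bar x$ yields an $\exists\forall$ sentence, outside the reach of Theorem~\ref{UnivwithCon1}. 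You acknowledge this as ``the main obstacle'' and write ``provided these points can be assembled into a single universal sentence \ldots,'' but that provision is exactly the missing content, and the paper never assembles such a sentence.

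What the paper actually does is finer and essentially tuple-wise. It fixes a stratum of the validation process, which is cut out using the finitely many proper quotients $Q^i_1,\ldots,Q^i_n$ of the closure $C_i$ (equivalently, using a single variety $\Phi_i(\bar x,\bar z,\bar s,\bar a)=1$ that encodes factoring through one of them). For a tuple $\bar b_\ell$ in the random group whose relevant pre-images all lie in that stratum, it lifts a \emph{single specific} extension $(\bar b_\ell,\bar b^1_\ell,\bar b^2_\ell)$ of $\bar b_\ell$ to a solution of $\Sigma_{C_i}=1$ in $\Gamma_\ell$, and then applies Theorem~\ref{univ} to the concrete system $\Phi_i=1$ at that specific triple. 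This gives the dichotomy: either $\Phi_i\neq 1$ with overwhelming probability (and then the transferred universal sentence $\Sigma_{C_i}=1\rightarrow(\Phi_i=1\leftrightarrow\psi=1)$ over constants, which is true in $\F$ and hence w.o.p.\ in $\Gamma_\ell$ by Theorem~\ref{UnivwithCon1}, forces $\psi\neq 1$ w.o.p.); or the triple lifts to $\F$, and the lift is then a pre-image of $\bar b_\ell$ violating the stratum condition, a contradiction. Finally, because the validation produces only \emph{finitely} many strata, the finitely many bad events each have probability tending to $0$, giving the result. Your proposal omits both essential ingredients --- the quotient/$\Phi_i$ characterization of the strata that makes Theorem~\ref{univ} applicable, and the stratum-by-stratum bookkeeping that replaces the impossible ``one big universal sentence'' --- so as written it has a genuine gap at precisely the step you flag as uncertain.
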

\begin{proof}
Allowing constants, by Lemma \ref{reduction}, a $\forall\exists$-sentence $\tau$ is equivalent in $\mathbb{F}$ to a sentence of the form $\forall\bar{x}\exists\bar{y}\bigl(\sigma(\bar{x},\bar{y},\bar a)=1\land \psi(\bar{x},\bar{y},\bar a)\neq 1\bigr)$, where $\sigma(\bar{x},\bar{y},\bar a)$ and   $\psi(\bar{x},\bar{y},\bar a)$ are single words in $\langle\bar{x},\bar{y}, \bar a\rangle$. Suppose $\tau$ is true in $\F$. We claim that the iterative procedure presented in Section \ref{Validation}, is also valid for a random group (of density $d$). Hence, the formal solutions produced along the procedure will validate the sentence with overwhelming probability in the corresponding definable sets in a random group.  

By Theorem \ref{Merz} there exists a formal solution $\bar{w}(\bar{x},\bar a)$ such that $\sigma(\bar{x},\bar{w}(\bar{x}, \bar a),\bar a)$ is tivial in $\langle\bar{x}, \bar{a}\rangle$ and $\bar{w}(\bar{x}, \bar a)$ witnesses that the inequation also holds in $\mathbb{F}$ for tuples in $V^c:=\mathbb{F}^{\abs{\bar{x}}}\setminus V$, where $V$ is the solution set of $\psi(\bar{x},\bar{w}(\bar{x},\bar a), \bar a)=1$ in $\mathbb{F}$, which is a proper subset of $\F^{|\bar x|}$. We claim that for any tuple $\bar{b}_\ell$ in a random group $\Gamma_\ell$ with the property that all of its pre-images are in $V^c$, then with overwhelming probability  $\sigma(\bar{b}_\ell,\bar{w}(\bar{b}_\ell,\bar a),\bar a)=1 \land \psi(\bar{b}_\ell,\bar{w}(\bar{b}_\ell,\bar a),\bar a)\neq 1$. Indeed, obviously, since $\sigma(\bar{x},\bar{w}(\bar{x},\bar a),\bar a)$ is trivial in $\langle\bar{x},\bar{a}\rangle$, we get $\sigma(\bar{b}_\ell,\bar{w}(\bar{b}_\ell,\bar a),\bar a)=1$ for any $\bar{b}_\ell$ in a random group $\Gamma_\ell$ (and actually in any group). For the inequation, 
by our hypothesis we have  $\psi(\bar{c}_\ell,\bar{w}(\bar{c}_\ell,\bar a),\bar a)\neq 1$ in $\F$ for any pre-image $\bar c_\ell$ of $\bar{b}_\ell$. Hence, by Theorem \ref{univ}, either the probability that $\psi(\bar{b}_\ell,\bar{w}(\bar{b}_\ell,\bar a),\bar a)=1$ goes to $0$ as $\ell$ goes to $\infty$ and, consequently, the probability that $\psi(\bar{b}_\ell,\bar{w}(\bar{b}_\ell,\bar a),\bar a)\neq 1$ goes to $1$, or $\bar b_\ell$ is the image of a tuple $\bar c_\ell$ that satisfies $\psi(\bar{x},\bar{w}(\bar x,\bar a),\bar a)=1$ in $\F$, which is a contradiction to the choice of $\bar b_{\ell}$. Therefore, only the first alternative holds and the result follows. 

We continue with the tuples in a random group whose pre-images not all belong to $V^c$. 

Recall, that the variety $V$, can 
be further seen as the union of irreducible varieties, i.e. varieties that correspond to limit groups. We continue with one such (restricted) limit group, say $L(\bar x, \bar a)$, and we apply Theorem \ref{MerzTowers} for $L(\bar x, \bar a)$ and a tower $T(\bar{x},\bar z, \bar a)$ in order to obtain finitely many formal solutions, $\bar{w}_1(\bar{x},\bar{z}, \bar{s},\bar a), \ldots, \bar{w}_q(\bar{x},\bar{z}, \bar{s},\bar a)$, and a set of covering closures, $C_1(\bar x, \bar z, \bar s, \bar a), \ldots, C_q(\bar x, \bar z, \bar s, \bar a)$, of $T$,  so that $\sigma(\bar{x},\bar{w}_i(\bar{x},\bar{z}, \bar{s},\bar a), \bar a)$ is trivial in $C_i$ for any $i\leq q$, i.e. for any (restricted) morphism $h:C_i\rightarrow \mathbb{F}$, $h$ kills $\sigma(\bar{x},\bar{w}_i(\bar{x},\bar{z}, \bar{s},\bar a), \bar a)$. Moreover, for each $1\leq i \leq q$ there exists a (restricted) morphism $h_i:C_i\rightarrow \mathbb{F}$ such that $\psi(h_i(\bar{x}),h_i(\bar{w}_i), \bar a)\neq 1$ in $\F$. 

At this point we split the proof in two cases. For clarity we will first prove the result in the minimal rank case. Recall that in this case the termination of the validating process is easier and the choices for the stratification of $\F^{|\bar x|}$ are less .  
\ \\ \\
{\bf The minimal rank case:} For each $i\leq q$, we consider the subset $D_{L,T}^i$ of $\mathbb{F}^{\abs{x}}$ that contains all tuples with the following properties: they extend to (restricted) morphisms from $C_i$  to $\mathbb{F}$ and any such extension {\bf does not} satisfy $\psi(\bar{x},\bar{w}_i(\bar{x},\bar{z}, \bar{s}), \bar a)=1$ in $\F$. 

$$D_{L,T}^i(\bar x):=\exists \bar z, \bar s \bigl(\Sigma_{C_i}(\bar x, \bar z, \bar s, \bar a)=1\bigr)\bigwedge \forall \bar z, \bar s\bigl(\Sigma_{C_i}(\bar x, \bar z, \bar s, \bar a)=1\rightarrow \psi(\bar{x},\bar{w}_i(\bar{x},\bar{z}, \bar{s}), \bar a)\neq 1\bigr)$$

We note in passing that the solution set of the above formula in $\F$ is a proper subset (in principle) of the ``good tuples", i.e. the tuples for which the formal solution $\bar w_i$ validates the sentence at this stage.   

By \cite[Proposition 1.2]{Sela4}, there exists a finite number of proper quotients $Q_1^i(\bar x, \bar z, \bar s, \bar a),\ldots ,$ $Q_n^i(\bar x, \bar z, \bar s, \bar a)$ of $C_i$ such that $\bar c$ belongs to $D_{L,T}^i$ if and only if it extends to a (restricted morphism from $C_i$ to $\F$ and no such (restricted) extension factors through one of the quotients $Q_1^i,\ldots , Q_n^i$. 
 
For any fixed $\ell$, we take a tuple $\bar b_\ell$ in a random group $\Gamma_\ell$ such that it has a pre-image $\bar c_\ell$ that extends to satisfy $\Sigma_{C_i}(\bar x, \bar z, \bar s, \bar a)=1$ in $\F$ and any such pre-image (that extends to satisfy $\Sigma_{C_i}(\bar x, \bar z, \bar s, \bar a)=1$)  belongs to $D^i_{L,T}$. 


We will show that for such $\bar b_\ell$ we can find a tuple $\bar d_\ell$ so that $\sigma(\bar{b_\ell},\bar d_\ell, \bar a)=1 \land \psi(\bar{b}_\ell,\bar d_\ell, \bar a)\neq 1$ with probability tending to $1$ as $\ell$ goes to $\infty$. 

Denote the formula that collects the tuples corresponding to $\bigcup\limits_{j=1}^n Hom_\F(Q_j^i,\F)$ by $\Phi_i (\bar{x},\bar{z},\bar{s}, \bar a)$. The solution set of $\Phi_i$ in $\F$ is a union of varieties and, since we work over parameters, it is equivalent to a variety that, by abusing notation, we denote by $\Phi_i(\bar{x},\bar{z},\bar{s}, \bar a)=1$. Note that, since $\bar b_\ell$ has a pre-image that extends to satisfy $\Sigma_{C_i}(\bar x, \bar z, \bar s, \bar a)=1$, 
$\bar b_\ell$ itself extends to satisfy $\Sigma_{C_i}(\bar x, \bar z, \bar s, \bar a)=1$, say by $\bar b_\ell,\bar{b}_\ell^1,\bar{b}_\ell^2$. To see this just consider the projection of the solution of $\Sigma_{C_i}(\bar x, \bar z, \bar s, \bar a)=1$ in $\F$ to $\Gamma_\ell$. By Theorem \ref{univ}, for the triple (of tuples) $\bar b_\ell, \bar{b}_\ell^1,\bar{b}_\ell^2$, either the probability that  $\Phi_i(\bar{b}_\ell,\bar{b}^1_\ell,\bar{b}^2_\ell, \bar a)= 1$ tends to $0$, as $\ell$ goes to $\infty$, or each such triple  is the image of a solution $\bar{c}_\ell,\bar{c}^1_\ell, \bar{c}^2_\ell$ of the same formula in $\F$. 
\begin{itemize}
\item In the first alternative, equivalently,   $\Phi_i(\bar{b}_\ell,\bar{b}^1_\ell,\bar{b}^2_\ell, \bar a)\neq 1$ with overwhelming probability, and we are done, since 
$$\F\models\forall \bar x,\bar z,\bar s\bigl(\Sigma_{C_i}(\bar x,\bar z,\bar s,\bar a)=1\rightarrow (\Phi_i(\bar x,\bar z,\bar s,\bar a)=1\leftrightarrow \psi(\bar x,\bar w_i(\bar x, z,\bar s,\bar a),\bar a)=1)\bigr) $$
Thus, since $\Sigma_{C_i}(\bar{b}_\ell,\bar{b}^1_\ell,\bar{b}^2_\ell, \bar a)=1$, we also get that $\psi(\bar b_{\ell}, w_i(\bar{b}_\ell, \bar{b}_\ell^1, \bar b_\ell^2, \bar a), \bar a)\neq 1$ with overwhelming probability. 

\item In the second alternative, i.e. when the tuple $\bar b_\ell, \bar{b}^1_\ell, \bar{b}^2_\ell$ is the image of a solution $\bar{c}_\ell,\bar{c}^1_\ell, \bar{c}^2_\ell$ of the same formula in $\F$, by the choice of $\bar b_\ell$ and since a fortiori $\bar{c}_\ell,\bar{c}^1_\ell, \bar{c}^2_\ell$ satisfies $\Sigma_{C_i}(\bar x, \bar z, \bar s, \bar a)=1$ in $\F$ (as the solution set of $\Phi_i(\bar{x},\bar{z},\bar{s}, \bar a)=1$ is a subset of the solution set of $\Sigma_{C_i}(\bar x, \bar z, \bar s, \bar a)=1$) we get a contradiction.  

\end{itemize}

We recall that $\psi(\bar{x},w(\bar{x},\bar{a}),\bar{a})=1$ is the union of finitely many sets of the form $Hom_{\F}(L_i(\bar{x},$ $\bar{a}),\F)$, for $i\leq p$, and for each $i\leq p$, there exist finitely may towers $T_{i,j}(\bar{x},\bar{z}, \bar{a})$, for $j\leq m_i$, such that any (restricted) morphism from $L_i$ to $\F$ extends to a morphism from $T_{i,j}$ to $\F$, for some $j\leq m_i$. Moreover, for each $T_{i,j}$, for $i\leq p$, and $j\leq m_i$ there exist finitely many towers $C_{i,j,r}$, for $r\leq q_{i,j}$, that form a covering closure of $T_{i,j}$. 

At the end of this second step we have validated all tuples $\bar b_\ell$ in a random  group $\Gamma_\ell$ that have a pre-image that extends to some $C_{i,j,r}$, for $i\leq p$, $j\leq m_i$ and $r\leq q_{i,j}$, and any such pre-image belongs to $D^r_{L_i, T_{i,j}}$.

The validation process in a nonabelian free group, continues with each $QL^{i,j,r}_k$, which is a quotient of $C_{i,j,r}$ that collects ``bad tuples", in parallel. For each such quotient there exist finitely many towers (of lower complexity than $C_{i,j,r}$) and for each such tower a set of towers that form a  covering closure and bear formal solutions. At this step we validate all $\bar b_\ell$ that extend to one of this closures and any of such extensions does not satisfy $\psi(\bar x, \bar y, \bar a)=1$ with the new formal solution corresponding to the particular closure at the place of $\bar y$. This process produces a stratification of $\F^{|\bar x|}$ by finitely many subsets $Str_1, Str_2 , \ldots, Str_\alpha$, such that $Str_i$ collects the tuples that work at step $i$.


We finally fix a $\forall\exists$ sentence, say $\tau$, which is true in $\F$. For $\ell\in \mathbb{N}$, consider a group $\Gamma_{\ell}$ that does not satisfy $\tau$. Suppose $\bar b_\ell$ witnesses it. As $\ell$ goes to $\infty$, the witnesses $\bar b_\ell$ will have pre-images that vary through the finitely many strata, but, for any stratum, the ratio of the number of groups for which the sentence will be false over the number of all groups goes to $0$, hence, since there are only finitely many strata,  $\tau$ is true in a random group with overwhelming probability.     




\ \\ \\
{\bf The general case} In this case, we start with the restricted limit group $L(\bar x, \bar a)$ and a tower $T(\bar x, \bar z, \bar a)$ that corresponds to a well-separated resolution of $L$. We apply Theorem \ref{MerzTowers} for $L$ and $T$ in order to obtain finitely many formal solutions $\bar w_i(\bar x, \bar z, \bar s, \bar a)$, for $i\leq q$, and a set of covering closures $C_i(\bar x, \bar z, \bar s, \bar a)$, for $i\leq q$, of $T$, so that $\sigma(\bar x, \bar w_i(\bar x, \bar z, \bar s, \bar a), \bar a)$ is trivial in $C_i$, for any $i\leq q$, and, moreover, there exists a restricted morphism $h_i:C_i\rightarrow \F$ with  $\psi(h_i(\bar x), h_i(\bar w_i), \bar a)\neq 1$.

We fix some $C_i$, for $i\leq q$. Let $S_1, \ldots , S_k$ be a collection of solid (or rigid) limit groups that appear in the multi-graded Makanin-Razborov diagrams of different levels of its resolution (see Definition \ref{Minimal}). 

Recall that any morphism $h:L\rightarrow \F$, extends to some $C_i$ as (taut) shortest form with respect to some collection of solid limit groups.   

We consider the subset $C^i_{S_1,\ldots , S_k}$ of $\mathbb{F}^{\abs{\bar x}}$ that contains all tuples with the following properties:
\begin{itemize}
    \item they extend to taut morphisms that factor through the fixed $C_i$, and factor out, as solid (or rigid) morphisms, through the fixed $S_i$, for $i\leq k$; 
   
    \item no shortest form, with respect to $Mod(S_1),\ldots , Mod(S_k)$, such extension satisfies $\psi(\bar{x},\bar{w}_i(\bar{x},$ $\bar{z}, \bar{s}, \bar{a}), \bar{a})=1$.
\end{itemize} 





We consider tuples $\bar b_\ell$ in the random group $\Gamma_\ell$ with the following properties:
 \begin{enumerate}
     \item $\bar b_\ell$ has a pre-image $\bar c_\ell$, that admits a (taut)  shortest form extension, with respect to $Mod(S_1),$ $\ldots, Mod(S_k)$, factoring through $C_i$;
     \item any such pre-image of $\bar b_{\ell}$ belongs to $C^i_{S_1,\ldots,S_k}$;
     \item no pre-image of $\bar b_{\ell}$ extends to factor through some well-separated resolution that imposes a more refined pinching than (the well-separated resolution that corresponds to) $C_i$. 
 \end{enumerate}
 
The last point guarantees that any pre-image of $\bar b_{\ell}$ that extends to a morphism from $C_i$ is taut with respect to the fixed resolution. 

 
 We will show that  such $\bar b_\ell$ can be extended to $\bar{b}_\ell,\bar{b}^1_\ell,\bar{b}^2_\ell$ so that $\sigma(\bar{b}_\ell,\bar{w}_i(\bar{b}_\ell,\bar{b}^1_\ell, \bar b^2_\ell), \bar a)=1$ and $\psi(\bar{b}_\ell,\bar{w}_i(\bar{b}_\ell,\bar{b}^1_\ell, \bar b^2_\ell), \bar a)\neq 1$ with overwhelming probability.

 Indeed, since $\bar b_\ell$ has a pre-image $\bar c_\ell$ that extends to a 
 morphism that factors through $C_i$ and does not satisfy $\psi(\bar{x},\bar{w}_i(\bar{x},\bar{z},\bar{s}, \bar a), \bar a)=1$, we consider the image, say $\bar b_\ell, \bar b_\ell^1, \bar b_\ell^2$, of that triple (of tuples). It is obvious that it satisfies $\Sigma_{C_i}(\bar x, \bar z, \bar s, \bar a)=1$ with overwhelming probability. We next consider the conjunction of $\psi(\bar{x},\bar{w}_i(\bar x, \alpha(\bar{z}), \alpha(\bar{s}),\bar a),\bar a)=1$ for $\alpha$ in $Mod(S_1)\cdot$ 
$\ldots\cdot Mod (S_k)$. Since $\mathbb{F}$ is equationally Noetherian the above (infinite) system of equations is equivalent to a finite subsystem. Denote this finite subsystem by $\Phi^{Mod}_i (\bar{x},\bar{z},\bar{s}, \bar a)=1$, we further add to $\Phi^{Mod}_i(\bar{x},\bar{z},\bar{s}, \bar a)=1$ the equations $\Sigma_{C_i}(\bar x, \bar z, \bar s, \bar a)=1$ and the equations for the fixed solid (or rigid) limit  groups $\Sigma_{S_1}(\bar{x},\bar{z},\bar{s}, \bar a)=1,\ldots,\Sigma_{S_k}(\bar{x},\bar{z},\bar{s}, \bar a)=1$. We still denote this system of equations by $\Phi_i^{Mod}(\bar{x},\bar{z},\bar{s}, \bar a)=1$.



By Theorem \ref{univ},  either the probability that  $\Phi^{Mod}_i(\bar{b}_\ell,\bar{b}^1_\ell, \bar{b}^2_\ell,\bar a)=1 $ is zero
or $\bar{b}_\ell,\bar{b}^1_\ell, \bar{b}^2_\ell$ is the image of  $\bar{c}_\ell,\bar{c}^1_\ell, \bar{c}^2_\ell$ such that $\Phi^{Mod}_i(\bar{c},\bar{c}_1, \bar{c}_2, \bar a)=1 $ in $\F$. 

If the probability that $\Phi^{Mod}_i(\bar{b}_\ell,\bar{b}^1_\ell, \bar{b}^2_\ell, \bar a)=1 $ is zero, then, equivalently, $\Phi^{Mod}_i(\bar{b}_\ell,\bar{b}^1_\ell, \bar{b}^2_\ell, \bar a)\neq 1$ with overwhelming probability. Thus, since $\bar{b}_\ell,\bar{b}^1_\ell, \bar{b}^2_\ell$ satisfies $\Sigma_{C_i}(\bar{x},\bar{z},\bar{s}, \bar a)=1, \Sigma_{S_1}(\bar{x},\bar{z},\bar{s}, $ $\bar a)=1, \ldots, \Sigma_{S_k}(\bar{x},\bar{z},\bar{s}, \bar a)=1$, we get that  $\psi(\bar{b}_\ell,  \bar w_i(\bar b_\ell, \alpha (\bar{b}^1_\ell), \alpha (\bar{b}^2_\ell), \bar a), \bar a)\neq 1$ for some $\alpha\in Mod(S_1)\cdot\ldots\cdot Mod (S_k)$  with overwhelming probability and $\bar w_i(\bar b_\ell, \alpha (\bar b_\ell^1), \alpha (\bar b_\ell^2), \bar a)$ validates the sentence for $\bar b_\ell$.  

If the tuple $\bar b_\ell, \bar{b}^1_\ell, \bar{b}^2_\ell$  is the image of a solution $\bar{c},\bar{c}_1, \bar{c}_2$ such that $\Phi^{Mod}_i(\bar{c},\bar{c}_1, \bar{c}_2)=1 $ in $\F$, then, $\bar{c},\bar{c}_1, \bar{c}_2$ satisfies $\Sigma_{C_i}(\bar x, \bar z, \bar s, \bar a)=1$, and $\Sigma_{S_1}(\bar{x},\bar{z},\bar{s},\bar a)=1, \ldots, \Sigma_{S_k}(\bar{x},\bar{z},\bar{s},\bar a)=1$  in addition, we may change $\bar c_1$, $\bar c_2$ by an automorphism $\alpha$ to make them shortest form since they satisfy $\Phi_i(\bar{x}, \bar{w}_i(\bar{x}, \alpha (\bar{z}), \alpha (\bar{s}),\bar a)),\bar a)=1$ for any $\alpha$ in $Mod(S_1)\cdot$ $\ldots\cdot Mod (S_n)$. Therefore we assume  that $\bar c, \bar{c}_1, \bar{c}_2$ is shortest form. 
 Hence, this solution, by the choice of $\bar b_\ell$, must belong to $C^i_{S_1, \ldots, S_k}$ and in particular it factors out  as solid morphisms from $S_1, \ldots, S_k$, and does not satisfy $\psi(\bar x, \bar w_i(\bar x, \bar z, \bar s, \bar a), \bar a)=1$, a contradiction.

Those  tuples $\bar b$  that have  pre-images that extent to $\bar{c}_1,\bar{c}_2$ satisfying equations of $S_1,\ldots ,S_k$ that are not in $C_{S_1,\ldots , S_k}$  either correspond to morphisms that factor out from a different set of solid (or rigid) limit groups, $S_{i_1},\ldots , S_{i_k}$, in the multi-graded Makanin-Razborov diagrams of the different levels of the completed resolution, or have pre-images $\bar c$ that extend to $\bar c_1, \bar c_2$  which are shortest form with respect to $Mod(S_1)\ldots Mod(S_n)$ and also satisfy $\psi(\bar{x},\bar{w}_i(\bar{x},\bar{z}, \bar{s},\bar a),\bar a)=1$. In the first case, we tackle the finitely many cases in parallel, at the same step. In the second case, we define quotients of each $C_i$ with which we will proceed to the next step and will repeat similar constructions.    

Let $Q^i_1(\bar x, \bar z, \bar s, \bar a), \ldots,Q^i_{p_i}(\bar x, \bar z, \bar s, \bar a) $ be the quotient groups of $C_i(\bar x, \bar z, \bar s, \bar a)$ obtained by all (taut) shortest form morphisms, with respect to $Mod(S_1), \ldots ,Mod(S_k)$, that factor through the fixed resolution and in addition satisfy $\psi(\bar{x},\bar{w}_i(\bar{x},\bar{z}, \bar{s},\bar a),\bar a)=1$. We assume that the natural image of $L(\bar x, \bar a)$ 
in $Q^i_j(\bar x, \bar z, \bar s, \bar a)$, for $j\leq p_i$, is not a proper quotient of $L$, otherwise we repeat this step with the image of $L$.   
Each $C_i$ and every set of solid (or rigid) limit groups in the multi-graded Makanin-Razborov diagrams from the different levels of the completed resolution   are considered independently. At this step we verified the sentence simultaneously for all tuples $\bar b_\ell$ satisfying the three conditions for some $i\leq q$ and some set, $S_1, \ldots, S_k$, of solid (or rigid) limit groups.  

Remaining tuples $\bar b_\ell$ either have pre-images $\bar c_{\ell}$ that extend to $\bar c_1,\bar c_2$ that factor through one of $Q^i_1(\bar x, \bar z, \bar s, \bar a), \ldots,Q^i_k(\bar x, \bar z, \bar s, \bar a) $ or  have pre-images $\bar c_\ell$ that extent to $\bar c_3,\bar c_4$ that factor through another $C_j$ that imposes more refined pinching than $C_i$.  In other words these $\bar b_\ell$ have pre-images $\bar c_\ell$ that belong to a different stratum in the process for $\mathbb F$. 

We now give the final argument of the proof. Suppose $\tau$ is a $\forall\exists$ sentence, which is true for a nonabelian free group.   The main point is that the validating process decomposes $\F^{| \bar x|}$ in finitely many different strata. For $\ell\in \mathbb{N}$, consider a group $\Gamma_{\ell}$ that does not satisfy $\tau$. Suppose $\bar b_\ell$ witnesses it. As $\ell$ goes to $\infty$, the witnesses $\bar b_\ell$ will have pre-images that vary through the finitely many strata, but, for any stratum, the ratio of the number of groups for which the sentence will be false over the number of all groups goes to $0$, hence, since there are only finitely many strata,  $\tau$ is true in a random group with overwhelming probability.
\end{proof}

\begin{theorem}
Let $d<1/16$. Every $\exists\forall$-sentence that is true in a nonabelian free group, it is true in the random group.
\end{theorem}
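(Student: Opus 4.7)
The plan is to reduce this statement directly to Theorem \ref{UnivwithCon1}, which already handles the transfer of universal sentences with constants from $\mathbb{F}_n$ to the random group. The key observation is that witnesses for the outer existential quantifier of an $\exists\forall$-sentence can be chosen in $\mathbb{F}_n$ and pushed down canonically to constants in $\Gamma_\ell$, after which the remaining universal statement is exactly of the form covered by Theorem \ref{UnivwithCon1}.

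More concretely, consider an $\exists\forall$-sentence of the form $\exists \bar{x}\, \forall \bar{y}\, \phi(\bar{x},\bar{y},\bar{c})$, where $\phi$ is quantifier-free and $\bar{c}$ is a (possibly empty) tuple of constants from $\mathbb{F}_n$. By the equivalence of the first-order theories of nonabelian free groups (Tarski), the hypothesis that the sentence holds in some nonabelian free group implies in particular that $\mathbb{F}_n \models \exists \bar{x}\, \forall \bar{y}\, \phi(\bar{x},\bar{y},\bar{c})$ for the same $n$ that underlies the random group. Therefore I can pick a witness $\bar{a} \in \mathbb{F}_n$ with $\mathbb{F}_n \models \forall \bar{y}\, \phi(\bar{a},\bar{y},\bar{c})$.

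Next, via the canonical quotient map $\pi_\ell : \mathbb{F}_n \to \Gamma_\ell$, the tuple $\pi_\ell(\bar{a})$ is a tuple of constants in $\Gamma_\ell$ in the sense of the paper's definition. The statement $\forall \bar{y}\, \phi(\bar{a},\bar{y},\bar{c})$ is now a universal sentence over $\mathbb{F}_n$ with constants $\bar{a},\bar{c}$, and it is true in $\mathbb{F}_n$. By Theorem \ref{UnivwithCon1}, it is therefore true with overwhelming probability in $\Gamma_\ell$. Interpreting the constants $\pi_\ell(\bar{a})$ as a witness for the existential quantifier of the original $\exists\forall$-sentence, we conclude that $\Gamma_\ell \models \exists \bar{x}\, \forall \bar{y}\, \phi(\bar{x},\bar{y},\bar{c})$ with overwhelming probability.

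In contrast to the proof of Theorem \ref{AE1}, there is essentially no obstacle here: none of the heavy machinery of formal solutions, taut shortest form morphisms, towers, or covering closures is required. The only subtlety worth flagging is to make sure one takes the witnesses in $\mathbb{F}_n$ (not in some other nonabelian free group that has no quotient map to $\Gamma_\ell$); this is harmless because the first-order theory is independent of the rank. Once the witnesses are lifted to $\mathbb{F}_n$, the remainder is simply an invocation of the universal transfer principle.
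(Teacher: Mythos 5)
Your proposal is correct and follows essentially the same route as the paper: both pass from an arbitrary nonabelian free group to $\F_n$ via the elementary equivalence of nonabelian free groups (the paper cites \cite{MR2238945}, \cite{MR2293770}), fix a witness $\bar a\in\F_n$ for the existential block, and then apply Theorem \ref{UnivwithCon1} to the resulting universal sentence with constants. The only cosmetic difference is that the paper does not bother to carry a tuple $\bar c$ of pre-existing constants, but this changes nothing.
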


\begin{proof} 
Let $Q(\bar x, \bar y)$ be a quantifier free formula such that 
$$\F\models \exists \bar x \forall \bar y Q(\bar x, \bar y)$$ 
for a nonabelian free group $\F$. By \cite{MR2238945}, \cite{MR2293770} the sentence is true in the nonabelian free group $\F_n$ of rank $n$. Hence, for some $\bar b$ in $\F_n$, we have $\forall \bar yQ(\bar b, \bar y)$ is true in $\F_n$. Thus, by Theorem \ref{UnivwithCon1}, it is true with overwhelming probability in a random group of density $d$. Therefore, $\exists \bar x \forall \bar y Q(\bar x, \bar y)$ is true with overwhelming probability in a random group of density $d$.
%
 \end{proof}

The last two theorems imply the following.

\begin{corollary} 
Let $d<1/16$. A $\forall\exists$-sentence is true in a nonabelian free group  if and only if it is  true with overwhelming probability in a random group of density $d$.
\end{corollary}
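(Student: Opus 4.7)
The plan is to deduce the biconditional directly from the two preceding theorems; the corollary is essentially a logical rearrangement of what has already been done. The forward implication -- that any $\forall\exists$-sentence true in a nonabelian free group is true with overwhelming probability in a random group of density $d<1/16$ -- is exactly Theorem \ref{AE1}, so nothing further is needed in that direction.

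For the converse I would argue by contrapositive. Write a $\forall\exists$-sentence $\sigma$ in prenex form as $\forall\bar{x}\exists\bar{y}\, Q(\bar{x},\bar{y})$, where $Q$ is quantifier-free. Then $\neg\sigma$ is logically equivalent to the $\exists\forall$-sentence $\exists\bar{x}\forall\bar{y}\, \neg Q(\bar{x},\bar{y})$. Suppose $\sigma$ fails in a nonabelian free group $\F$. Then $\neg\sigma$ is true in $\F$, and so by the preceding $\exists\forall$-theorem, $\neg\sigma$ is true with overwhelming probability in a random group of density $d$. Equivalently, the probability that $\sigma$ holds in $\Gamma_\ell$ tends to $0$ as $\ell\to\infty$, which contradicts $\sigma$ being true w.o.p. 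Hence, if $\sigma$ is true w.o.p.\ in random groups of density $d<1/16$, then $\sigma$ must be true in every nonabelian free group.

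There is really no obstacle to overcome here. The only point worth flagging is the mutual exclusivity of the two overwhelming-probability claims for $\sigma$ and $\neg\sigma$: for each $\ell$ the probabilities sum to $1$, so the probability of one tending to $1$ forces the probability of the other to tend to $0$, making the contrapositive argument rigorous. Combined with the elementary equivalence of all nonabelian free groups (already used implicitly in the $\exists\forall$-theorem via \cite{MR2238945}, \cite{MR2293770}), the biconditional holds uniformly for any choice of nonabelian free group, completing the corollary.
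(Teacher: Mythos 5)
Your proof is correct and matches the paper's intent: the paper simply asserts the corollary follows from the two preceding theorems, and the contrapositive argument you give (using that $\neg\sigma$ is an $\exists\forall$-sentence and that the w.o.p.\ events for $\sigma$ and $\neg\sigma$ are mutually exclusive) is exactly the standard deduction being invoked.
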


\bibliography{biblio}

\end{document}